\documentclass[format=acmsmall, authorversion, screen=true]{acmart}
\usepackage{a26}
\usepackage{booktabs} 
\usepackage[ruled]{algorithm2e} 

\usepackage[utf8]{inputenc} 
\usepackage[T1]{fontenc}    
\usepackage{hyperref}       
\usepackage{url}            
\usepackage{booktabs}       
\usepackage{amsfonts}       
\usepackage{nicefrac}       
\usepackage{microtype}      
\usepackage{lipsum}
\usepackage{fancyhdr}       
\usepackage{graphicx}       
\graphicspath{{media/}}     
\usepackage{amsmath}

\usepackage{cuted} 

\usepackage{multirow}
\usepackage{array}
\usepackage{longtable}

\usepackage{gensymb}

\usepackage{caption}
\usepackage{subcaption}

\newtheorem{assumption}{Assumption}

\SetAlFnt{\small}
\SetAlCapFnt{\small}
\SetAlCapNameFnt{\small}
\SetAlCapHSkip{0pt}
\IncMargin{-\parindent}
\newcommand{\muf}{\mu_f}

\def\R{\ensuremath{\mathbb{R}}}
\def\N{\ensuremath{\mathbb{N}}}

\newcommand{\calm}{\mathcal{M}}
\renewcommand{\phi}{\varphi}
\newcommand{\eps}{\varepsilon}
\def\dmu{\mathrm{d}\mu}
\def\dnu{\mathrm{d}\nu}
\def\dl{\mathrm{d}\lambda}
\def\dd{\mathrm{d}\delta}

\makeatletter
\let\orig@typeset@author@bx\typeset@author@bx
\def\typeset@author@bx{\bgroup\let\if@ACM@anonymous\iffalse\orig@typeset@author@bx\egroup}
\makeatother

\title[Natural Invariant Measures]{Natural Invariant Measures for Chaotic Game Dynamics: Finding Order in Chaos}

\author{Jakub Bielawski}
\orcid{https://orcid.org/0000-0001-6816-4501}
\affiliation{
\institution{Department of Mathematics, Krakow University of Economics, }
 Rakowicka 27, 31-510 Kraków  
  \city{Kraków}
  \country{Poland}, bielawsj@uek.krakow.pl}

\author{Thiparat Chotibut}
\orcid{https://orcid.org/0000-0002-8936-1413}
\affiliation{\institution{Chula Intelligent and Complex Systems Lab, Department of Physics, Faculty of Science, Chulalongkorn University,} Bangkok 10330 \city{ Bangkok} \country{Thailand}, thiparatc@gmail.com}
\email{thiparat.c@chula.ac.th}%

\author{Fryderyk Falniowski}
\orcid{https://orcid.org/0000-0002-1994-9019}
\affiliation{\institution{Department of Mathematics, Krakow University of Economics, }
Rakowicka 27, 31-510 Kraków  
  \city{Kraków}
  \country{Poland}, falniowf@uek.krakow.pl}
\email{falniowf@uek.krakow.pl}%

\author{Micha{\l} Misiurewicz}
\orcid{https://orcid.org/0000-0003-2932-8686}
\affiliation{\institution{Department of Mathematical Sciences, Indiana University Indianapolis,} 402 N. Blackford Street, Indianapolis, Indiana 46202 \city{ Indiana} \country{USA}, mmisiure@iu.edu}
\email{mmisiure@iu.edu}%

\author{Georgios Piliouras}
\orcid{https://orcid.org/0000-0002-6236-3566}
\affiliation{\institution{Google DeepMind, } London EC4A 3TW \city{ London} \country{United Kingdom}, gpil@google.com}
\email{gpil@google.com}%

\begin{document}

\begin{abstract}
We study the long-term behavior of the Multiplicative Weights Update (MWU) algorithm in game settings where learning dynamics frequently fail to converge to Nash equilibria and instead exhibit Li-Yorke chaos. While such chaos precludes the prediction of specific long-term strategy profiles, it does not imply a lack of statistical structure. We demonstrate that \emph{natural invariant measures}---a fundamental concept from ergodic theory---provide the rigorous framework necessary to find order within this chaos. Focusing on a two-strategy congestion game, we prove that these measures allow for a comprehensive statistical characterization of the dynamics. Crucially, we show that this framework extends beyond simple strategy frequencies to \emph{general observables}, enabling the precise calculation of long-term time averages for broad classes of economic metrics---including \emph{payoffs}, \emph{social cost}, and \emph{regret}---despite chaos. Our results reveal that this simple learning algorithm captures the full spectrum of behaviors found in one-dimensional dynamical systems, from unique or multiple absolutely continuous invariant measures to complex periodic attractors as well as coexisting chaotic and stable (periodic) behaviors. By bridging game theory and dynamical systems, we show that statistical predictability is attainable even in the absence of pointwise convergence.
\end{abstract}

\begin{titlepage}

\begin{teaserfigure}
  \centering
\centerline{\includegraphics[width=0.65\textwidth]{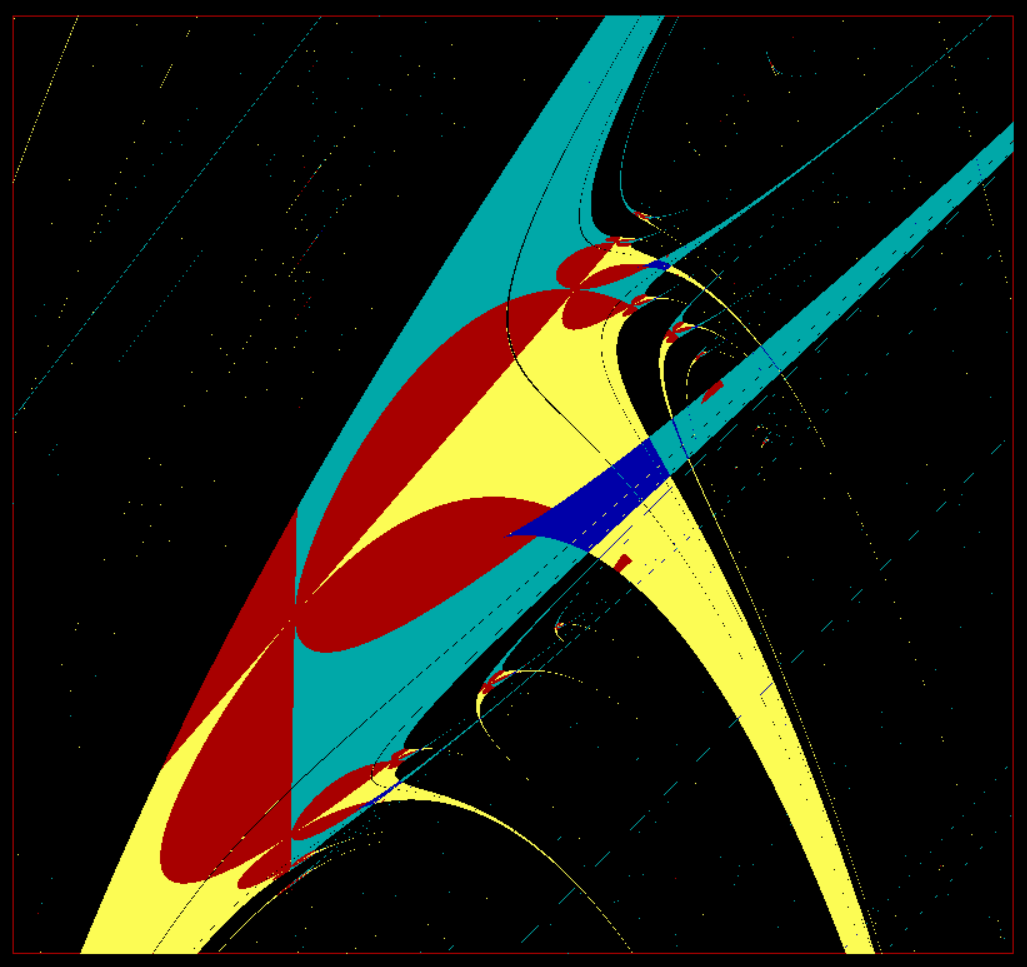}}
 \vskip 0.2in
  \Description{}
  \label{fig:teaser}
\end{teaserfigure}

\maketitle
\makeatletter
\gdef\@ACM@checkaffil{}
\makeatother

\end{titlepage}


\section{Introduction}

A central goal of game theory is to predict the long-term behavior of systems where multiple strategic agents interact. While traditional equilibrium concepts, such as Nash equilibrium, are standard tools for this analysis, they often fall short. They can mischaracterize true dynamical stability even when strategies globally converge, and they struggle particularly when the dynamics become chaotic~\cite{piliouras2026paradoxes}. In such scenarios, strategies may exhibit persistent, unpredictable fluctuations, making it impossible to forecast the system's behavior. This is particularly relevant when agents employ learning algorithms to adjust their strategies over time, as even simple algorithms can give rise to complex, chaotic dynamics in numerous diverse game settings~\cite{p2023no,palaiopanos2017multiplicative,Sato02042002,CFMP2019,andrade2021learning,skyrms1992chaos,mukhopadhyay2020deciphering,GallaFarmer_ScientificReport18,falniowski2025discrete,mitchener2004chaos,nowak1992evolutionary,peixe2022persistent,anagnostides2026chaos,metaxas2026risk}. A paradigmatic example of such a dynamic is the Multiplicative Weights Update (MWU) algorithm, which is widely used in online learning, optimization, and game theory~\cite{Arora05themultiplicative,Nisan:2007:AGT:1296179}.

Understanding and characterizing chaotic behavior in games is of paramount importance for several reasons. First, it allows us to make more realistic predictions in a variety of applications where strategic interactions are coupled with learning and adaptation, such as in online markets~\cite{leonardos2021dynamical}, algorithmic trading, algorithmic pricing \cite{calvano2020artificial,klein2021autonomous}, and evolutionary dynamics~\cite{Sato02042002,Sandholm10}. Second, it provides insight into the design of more robust and stable mechanisms and algorithms for various game-theoretic settings. By anticipating the potential for chaotic behavior of game dynamics, we can develop strategies to mitigate their negative effects or even exploit them for beneficial purposes~\cite{nagarajan2020chaos,leonardos2023optimality,piliouras2023multi}. Third, studying chaos in games sheds light on the fundamental properties of strategic interactions and the limitations of traditional rationality assumptions~\cite{arthur1994inductive,farmer2009economy,levy1994chaos}.

Despite the ubiquity of the MWU algorithm and its tendency to generate complex dynamics in various games~\cite{chotibut2021family,lin2024no,palaiopanos2017multiplicative,bailey2018multiplicative,cheung2020chaos,cheung2022evolution,piliouras2023multi}, a comprehensive understanding of its long-term behavior in these settings remains elusive. Prior work has established that MWU dynamics can exhibit Li-Yorke chaos in certain classes of games, particularly congestion games~\cite{palaiopanos2017multiplicative,chotibut2021family,CFMP2019}. This means that the system might be highly sensitive to initial conditions and arbitrarily close initial strategy profiles can diverge exponentially over time. However, while the existence of Li-Yorke chaos demonstrates the \textit{possibility} of complex behavior, it does not provide a complete statistical characterization of the dynamics. In particular, it does not tell us how frequently different strategy profiles will be visited in the long run or how to compute time averages of important quantities like payoffs or social welfare. Despite significant follow-up work on the MWU dynamics and its generalizations in various games \cite{bielawski2021follow,bielawski2024memory,bielawski2025heterogeneity}, these fundamental questions about its statistical behavior have remained unanswered until now.

We demonstrate that natural invariant measures provide a powerful and versatile framework for studying chaotic game dynamics, specifically in the context of MWU. Our theoretical results establish a rigorous connection between these measures and the long-term statistical properties of strategy distributions. Crucially, we show that this framework extends beyond simple strategy frequencies to general observables. By analyzing the natural invariant measures associated with the MWU dynamics, we can precisely calculate the long-term time averages of a broad class of economic metrics including payoffs, social cost, and regret even when the system exhibits Li-Yorke chaos. We provide a detailed analysis of a two-strategy congestion game, a canonical setting where MWU has been shown to exhibit chaotic behavior. Our results reveal a rich variety of dynamical phenomena, including cases with unique or multiple absolutely continuous invariant measures, corresponding to different long-term statistical behaviors. This work provides, for the first time, a comprehensive statistical description of the MWU dynamics in this challenging setting, opening up new avenues for research at the intersection of game theory, learning theory, dynamical systems and statistics.

The remainder of this paper is organized as follows.
In Section \ref{sec:prelim} we provide the relevant background on dynamical systems and ergodic theory.
In Section \ref{sec:natural}, we formally define invariant measures and natural invariant measures and discuss their properties.
In Section \ref{sec:theo} we show the relations between convergence of the averages of the observables and the behavior of the invariant measure.
In Section \ref{sec:MWU}, we focus on the MWU algorithm and the associated game dynamics in two strategy nonatomic congestion games. In Section \ref{sec:theoMWU} we show how the general statements of Section \ref{sec:theo} can be applied for MWU dynamics. Then, after characterization of the range of possible behaviors, we focus on describing natural measures for MWU dynamics and present numerical experiments exemplifying behaviors in Sections \ref{sec:oneinvariant} and \ref{sec:2cycles}. 
Looking at social cost and regret in Section \ref{sec:2cycles}, we give examples of economic observables in our game dynamics.
Finally, we conclude the paper and discuss some open questions and future research directions in Section \ref{sec:discussion}.

\section{Background from dynamical systems and ergodic theory}
\label{sec:prelim}
We begin with a short overview of concepts from dynamical systems. 
Let $(X,d)$ be a nonempty compact metric space, and 
let $f\colon X\to X$ be a continuous map, that is, let $(X,f)$ be a dynamical system. 
We will be interested in the long-term behavior, with special emphasis on the limiting behavior. To this aim, we introduce a few ideas. 
A fixed point $x$ of a dynamical system $(X,f)$ is called
\begin{itemize}
    \item attracting, if there is an open neighborhood $U\subset X$ of $x$ such that $f(U)\subset U$ and for every $y\in U$, we have $\lim_{m\to\infty}f^m(y)=x$, where $f^m$ is a composition of the map $f$ with itself $m$-times;
    \item repelling, if there is an open neighborhood $U\subset X$ of $x$ such that for every $y\in U$, $y\neq x$, there exists $m\in\mathbb{N}$ such that $f^m(y)\notin U$.
\end{itemize}
An orbit $(f^m(x))$ is called periodic of period $n$ if $f^n(x)=x$. The smallest such $n$ is called the period of $x$. The periodic orbit is called attracting, if $x$ is an attracting fixed point of $(X,f^n)$, and repelling, if $x$ is a repelling fixed point of $(X,f^n)$.

Generally, we cannot assume that trajectories of points will be attracted by a periodic orbit. Thus, we consider $\omega$-limit sets. For any $x\in X$ its $\omega$-limit set $\omega(x,f)$ consists of points $y\in X$ for which there exists a strictly increasing sequence of natural numbers $\{m_k\}$ such that $f^{m_k}(x)\to y$ as $k\to\infty$. Thus, \[\omega(x,f)=\bigcap_{m\in\mathbb{N}}\overline{\{f^k(x):k>m\}}.\]
Obviously $\omega$-limit set of a point belonging to periodic orbit is the periodic orbit itself.

To connect the dynamical systems theory with the ergodic theory, we consider the set $M(X)$ of all probability Borel measures on $X$ (here for a measure $\mu$ "probability" means that $\mu(X)=1$). This space has a subspace $M_{inv}(X)$ consisting of all invariant measures from $M(X)$, that is $\mu(A)=\mu(f^{-1}(A))$ for every measurable set $A\subset X$. Another point of view on invariance is to consider an operator $f_*:M(X)\to M(X)$, defined by $f_*(\mu)(A)=\mu(f^{-1}(A))$ for every measurable set $A\subset X$. Then $\mu$ is invariant if and only if it is a fixed point of $f_*$.

The set $M(X)$ is equipped with the weak-* topology. In this topology, a sequence $(\mu_m)$ converges to $\mu$ if and only if for every observable (continuous function) $\varphi\colon X\to\mathbb{R}$ the sequence $\big(\int\varphi \,d\mu_m\big)$ converges to $\int\varphi\,d\mu$. In this topology,
$M(X)$ is compact.
Interestingly, if a sequence $\big(\frac1{N_i}\sum_{k=0}^{N_i-1}f^k_*(\mu)\big)$, for some $N_i\to\infty$, converges, then the limit is automatically invariant.

An important class of measures are measures absolutely continuous with respect to the Lebesgue measure. These are measures $\mu$ such that if $\lambda$ is the Lebesgue measure then $\lambda(A)=0$ implies $\mu(A)=0$. They are important because the Lebesgue measure itself gives us a natural way of measuring how large a set is. Thus, more or less the same can be said about the absolutely continuous measures. A lot of work has been devoted to proving that in certain situations a dynamical system has an absolutely continuous invariant measure.

A measure $\mu\in M_{inv}(X)$ is called ergodic if for any measurable set $A$ satisfying $A=f^{-1}(A)$ we have $\mu(A)=0$ or $\mu(A)=1$. Those measures are extreme points of $M_{inv}(X)$. Their importance is to a great deal due to the Birkhoff Ergodic Theorem, which claims that if $\mu$ is ergodic then the limit of time averages of any $L^1$ function $\varphi\colon X\to\mathbb{R}$ exists almost everywhere and is equal to the space average of $\varphi$, $\int\varphi\;d\mu$.

The next important notion we will be using, is the negative Schwarzian derivative. Let $X$ be a closed interval. The Schwarzian derivative of $f\colon X\to X$ is given for the formula $Sf=\frac{f'''}{f'}-\frac32\big(\frac{f''}{f'}\big)^2$. If $Sf<0$ (except at the critical points) then also $Sf^k<0$ for any $k>0$. Now, if $Sf<0$ and $x$ is an attracting periodic orbit, there must be a critical point of $f$ (or an endpoint of $X$), whose trajectory is attracted to the orbit of $x$. This is an important property, in particular bounding the number of attracting periodic orbits by the number of critical points.

We will also use the notion of topological conjugacy. We will say that dynamical systems $(X,f)$ and $(Y,g)$ are (topologically) conjugate if there exists a homeomorphism $\psi:X\to Y$ such that $\psi\circ f=g\circ\psi$. This is a kind of isomorphism in the category of dynamical systems. In particular, many of the properties of the system stay the same (with some obvious minor changes) if we apply a conjugacy. For example, if $P$ is a periodic orbit in $(X,f)$ then $\psi(P)$ is a periodic orbit of $(Y,g)$ with the same properties (period, attracting, repelling, etc.). If $\mu$ is an invariant measure for $(X,f)$, then $\psi_*(\mu)$ is an invariant measure for $(Y,g)$. If one measure is ergodic, so is the other one. However, in order to preserve absolute continuity, we need $\psi$ to be smooth. Observe that conjugacy usually does not preserve the center of mass of an invariant measure (for example, when $\psi(x)=x+1$, then the center of mass will move by 1 to the right).

\section{Natural measures}
\label{sec:natural}
For a given dynamical system, such as a map $f$ from a space $X$ to itself, a fundamental question is how trajectories of points in $X$ evolve over time. While in simple cases, they may converge to a fixed point or a periodic orbit, in more complex systems, the behavior can be chaotic.

To understand such complex behavior, we often resort to statistical methods. Instead of tracking individual points, we consider the distribution of points over time. This leads us to the concept of invariant measures. An invariant measure $\mu$ is a probability distribution on $X$ that remains unchanged under the action of the map $f$.

A particularly important class of invariant measures are natural measures. These measures capture the long-term statistical behavior of the system, especially for initial conditions that are typical with respect to a reference measure (often the Lebesgue measure). Natural measures are crucial in understanding the system's sensitivity to initial conditions, predicting the frequency of events, and computing averages of observables.

By studying natural measures, we can gain insight into the underlying dynamics of complex systems, from simple maps to high-dimensional chaotic systems.

The basic question is the behavior of its orbits $(f^m((x))$, as the time $m$ goes to infinity. 
The simplest situation occurs when there
exists a globally attracting point $p\in X$, that is, a point such
that $\lim_{m\to\infty}f^m(x)=p$ for every $x\in X$. However, in most
cases we cannot count for such luck. Therefore, we have to lower our
expectations. The first change will be to replace every $x\in X$ by
every $x$ from a set of positive measure for some important
probability measure on $X$ (we will call it a \emph{reference measure}).
The second change will be to work with probability measures on $X$
instead of points of $X$. The map $f$ will be replaced by the induced
map $f_*$, defined in the preceding section. The third change will be
to replace the orbit $(f_*(\mu))$ by the time averages
$\frac1m\sum_{k=0}^{m-1}f_*^k(\mu)$.


Let us say more about the proposed changes. The reference measure we talk about in the first change is usually the Lebesgue (or, on manifolds, Riemannian) measure. In the study of dynamical systems, Lebesgue measure naturally corresponds to our intuitive notion of volume and experimental observability; an invariant measure is considered physically relevant precisely when it governs the time-averages for a set of initial conditions with positive Lebesgue measure \cite{young2002srb}. Therefore, a set of positive measure can be treated as a large, observable one.

The second change comes from a natural idea that perhaps we should look
at our system from a more global perspective than the pointwise one.
What cannot be seen when looking at one orbit maybe can be seen when
looking at the orbit of a measure, which takes into account many
individual orbits. Of course, we have to say what the map on the space
$\calm(X)$ (of all probability measures on $X$) is and what is the
topology in this space. That is why we introduce $f_*$ and weak-* topology introduced in the previous section.

The replacement from the third change makes sense since we are
observing not only the position of a point after applying $m$
iterates, but the whole orbit from time 0 to $m-1$. Thus, when for
instance, we have an attracting periodic point instead of an
attracting fixed point, the limit of $f^m(x)$ does not exist, while
the limit of time averages does.

Now we have to look whether with our changes we get something similar
to the globally attracting fixed point. And indeed, we do. Blank and
Bunimovich~\cite{blank2003multicomponent} call a measure $\muf$ \emph{natural} if there
exists an open nonempty subset $U\subset X$ (called the \emph{basin of
  attraction} for the measure $\muf$) such that for any measure
$\mu\in\calm(X)$ absolutely continuous with respect to the reference
measure and having its support in $U$ (that is, $\mu(U)=1$) we
have $\lim_{m\to\infty}\frac1m\sum_{k=0}^{m-1}f_*^k(\mu)=\muf$. As
shown in~\cite{misiurewicz2005ergodic}, if $\muf$ is ergodic, then it suffices to assume
this convergence for one such measure $\mu$, and automatically it
holds for all such measures. Thus, with our changes, a natural measure
plays the role of a globally attracting fixed point among measures
absolutely continuous with respect to the reference measure.\footnote{Moreover, it has been shown that in practice numerical simulations often display the absolutely continuous invariant measures with respect to Lebesgue measure \cite{gora1988computers}.}

Let us mention that natural measures are always invariant. That is,
$\muf(A)=\muf(f^{-1}(A))$ for every measurable set $A\subset X$. In
other words, $\muf$ is a fixed point of $f_*$.

In one-dimensional smooth dynamics one of the central problems is
finding invariant measures, absolutely continuous with respect to the
Lebesgue measure. They are usually ergodic and are natural
measures for the Lebesgue measure as the reference measure. Their
existence allows us to use powerful machinery of the Ergodic Theory;
in particular to prove various statistical properties of the system.

\section{Theoretical results}\label{Th-results}
\label{sec:theo}
 In this section we will show some facts on averages of observables, an invariant measure and its support.\footnote{In Section \ref{Th-results} we consider always only one measure
(there is no reference measure), so the term {\it almost everywhere} is not ambiguous.} 
We will make the following assumption.

\begin{assumption}\label{as1}
Let $(X,f)$ be a dynamical system, that is $X$ is nonempty and compact and $f\colon X \to X$ is a continuous map. Let $O\colon X\to\R$ be an observable (a continuous function).
\end{assumption}

In this section we show that convergence of the time averages of observables determine 
the average observation of the system with respect to a measure invariant for $(X,f)$. Then we show that if there exists a globally attracting periodic orbit of the system, then the time-averaged observations converge. Finally, we describe the structure of the support of an absolutely continuous measure for specific class of maps, which contains all game dynamics, which we will discuss in Section \ref{sec:MWU}.

\begin{theorem}\label{thm1}
Let Assumption \ref{as1} hold and let $x\in X$. If
\begin{equation}\label{eq1}
\lim_{i\to\infty}\frac1{N_i}\sum_{k=0}^{N_i-1}O(f^k(x))=y
\end{equation}
for some $N_i\to\infty$ and $y\in \R$, then there exists a probability
measure $\mu$ on the $\omega$-limit $\omega(x,f)$ of $x$, such that
$\int O\;d\mu=y$. In particular, this measure $\mu$ is invariant for $(X,f)$.
\end{theorem}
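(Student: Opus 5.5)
The plan is to use the empirical measures along the orbit of $x$. For $N\in\N$ define
\[
\nu_N=\frac1N\sum_{k=0}^{N-1}\delta_{f^k(x)}=\frac1N\sum_{k=0}^{N-1}f_*^k(\delta_x),
\]
where $\delta_z$ is the Dirac measure at $z$. By definition of the integral against a Dirac mass, $\int O\,d\nu_N=\frac1N\sum_{k=0}^{N-1}O(f^k(x))$. Hypothesis \eqref{eq1} therefore says precisely that $\int O\,d\nu_{N_i}\to y$.

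\textbf{Step 1: extract a limit.} Since $M(X)$ is compact in the weak-* topology (Section~\ref{sec:prelim}), I will pass to a subsequence of $(N_i)$ along which $\nu_{N_i}$ converges to some $\mu\in M(X)$. Compactness is used for sequences; this is legitimate because $X$ is a compact metric space, so the weak-* topology on $M(X)$ is metrizable. Since $O$ is continuous, weak-* convergence gives $\int O\,d\mu=\lim\int O\,d\nu_{N_i}=y$ along this subsequence.

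\textbf{Step 2: invariance.} Because $\nu_N$ is a Cesàro average of $f_*^k(\delta_x)$, the fact recalled in Section~\ref{sec:prelim} shows that the limit $\mu$ is invariant. To make this explicit: for continuous $\varphi$,
\[
\int\varphi\circ f\,d\nu_N-\int\varphi\,d\nu_N=\frac1N\bigl(\varphi(f^N(x))-\varphi(x)\bigr),
\]
which is bounded by $2\|\varphi\|_\infty/N\to0$. Hence $f_*\mu=\mu$.

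\textbf{Step 3: support in $\omega(x,f)$.} This is the only step needing care. The set $\omega(x,f)$ is nonempty and closed, since it is an intersection of nested nonempty compact sets. Let $U$ be an open set with $\overline U\cap\omega(x,f)=\emptyset$. Then only finitely many iterates $f^k(x)$ lie in $\overline U$; otherwise infinitely many iterates in the compact set $\overline U$ would have an accumulation point there, and that point would belong to $\omega(x,f)$.

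So $\nu_N(\overline U)\le C/N\to0$. By the portmanteau theorem for open sets, $\mu(U)\le\liminf\nu_{N_i}(U)=0$.

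The complement of $\omega(x,f)$ is a countable union of such open sets. For instance, take $U_n=\{z:d(z,\omega(x,f))>1/n\}$, whose closures avoid $\omega(x,f)$. Therefore $\mu(X\setminus\omega(x,f))=0$, so $\mu$ is a probability measure on $\omega(x,f)$.

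Equivalently, one could avoid portmanteau by choosing a continuous $\varphi\ge0$ that vanishes on $\omega(x,f)$ and is positive on $U_n$. Along the orbit $\varphi(f^k(x))\to0$, because $d(f^k(x),\omega(x,f))\to0$ by compactness. The Cesàro means of $\varphi$ then tend to $0$, giving $\int\varphi\,d\mu=0$.
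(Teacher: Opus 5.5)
Your proof is correct and follows the same route as the paper: form the empirical measures $\frac1{N_i}\sum_{k=0}^{N_i-1}f^k_*(\delta_x)$, extract a weak-* convergent subsequence by compactness, and then show the limit lives on $\omega(x,f)$. Your Step 3 is the complementary version of the paper's argument, which reruns the construction on the compact sets $\omega(x,f)\cup\{f^k(x):k>I\}$ and intersects over $I$. You also write out details the paper leaves implicit: the telescoping proof of invariance, the metrizability needed to pass to a subsequence, and the portmanteau step.
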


\begin{proof}
Since the space of all probability measures on $X$ with the weak-*
topology is compact, from the sequence
$\frac1{N_i}\sum_{k=0}^{N_i-1}f^k_*(\delta_x)$ we can
choose a convergent subsequence. In other words, we can assume that the
sequence
$\lim_{i\to\infty}\frac1{N_i}\sum_{k=0}^{N_i-1}f^k_*(\delta_x)$
converges to some measure $\mu$. We have
\[\hspace{-0.1in}\int O\;d\mu=\lim_{i\to\infty}\int O\;d\Big(\frac1{N_i}\sum_{k=0}^{N_i-1}
f^k_*(\delta_x)\Big)=\lim_{i\to\infty}\Big(\frac1{N_i}\sum_{k=0}^{N_i-1}
\int O\;d(f^k_*(\delta_x)\Big)\\
=\lim_{i\to\infty}\frac1{N_i}\sum_{k=0}^{N_i-1}O(f^k(x))=y.\]

It remains to prove that $\mu$ lives on $\omega(x,f)$. For this, we
repeat the above construction on
\begin{equation}\label{eq2}
\omega(x,f)\cup\{f^k(x):k>I\}
\end{equation}
instead of $X$, where $I$ is a positive integer. We get the same measure $\mu$. Since the intersection
of those sets for every positive integer $I$ is $\omega(x,f)$, the
measure $\mu$ lives on $\omega(x,f)$.
\end{proof}

Theorem \ref{thm1} guarantees two facts: (i) that a probability measure exists; (ii) that the average observation with respect to the probability measure aligns with the time-averaged observation of the evolution of the system. In a rather interesting way, this last property is analogous to the Birkhoff's Theorem for ergodic systems, although our systems are not necessarily ergodic.
We will use both of these results in the proof of the next theorem.

\begin{theorem}\label{thm2}
Let Assumption \ref{as1} hold. Let $\mu$ be a probability invariant measure for $(X,f)$, where $X$ is a convex subset of a linear space, and $p$ be a unique interior fixed point of the map $f$. Assume that the time-average trajectory converges to the fixed point $p$ almost everywhere, that is 
\begin{equation}\label{thm2:eq-1}
 \lim\limits_{m \to \infty} \frac{1}{m} \sum_{k=0}^{m-1} f^k(x) = p
\end{equation}
holds for all $x$ except for a set of measure zero.
\begin{enumerate}
\item If $O$ is linear/affine, then
$ \int O\: \dmu = O(p)$.

\item If $O$ is convex (concave), then
$\int O\: \dmu \underset{(\leqslant)}{\geqslant} O(p)$.
\end{enumerate}

\end{theorem}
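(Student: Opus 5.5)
The plan is to combine invariance of $\mu$ with the almost-everywhere convergence of time averages of the trajectory, through Birkhoff-type averaging.

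For part (1), let $O$ be affine. Invariance of $\mu$ gives $\int O\circ f^k\,\dmu=\int O\,\dmu$ for every $k$. Averaging over $k=0,\dots,m-1$ yields
\[
\int O\,\dmu=\int \frac1m\sum_{k=0}^{m-1}O(f^k(x))\,\dmu(x).
\]
Since $O$ is affine, the integrand equals $O\big(\frac1m\sum_{k=0}^{m-1}f^k(x)\big)$. By \eqref{thm2:eq-1} and continuity of $O$, this converges to $O(p)$ for $\mu$-almost every $x$. The integrands are bounded by $\max_X|O|$, which is finite because $X$ is compact and $O$ is continuous. Dominated convergence therefore lets us pass to the limit and gives $\int O\,\dmu=O(p)$.

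For part (2), we cannot pull $O$ inside the average. I will first extract a simpler consequence of part (1): the barycenter of $\mu$ is $p$. In a finite-dimensional setting (the intended application, a simplex or interval), apply part (1) to each coordinate functional, or more generally to every continuous linear functional $\ell$. This gives $\ell\big(\int x\,\dmu\big)=\int\ell\,\dmu=\ell(p)$, so $\int x\,\dmu(x)=p$. Then Jensen's inequality for the convex function $O$ on the convex compact set $X$ gives
\[
\int O\,\dmu\geqslant O\Big(\int x\,\dmu\Big)=O(p).
\]
The concave case follows by applying this to $-O$.

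An alternative route for part (2) avoids the barycenter. Convexity gives $O\big(\frac1m\sum f^k(x)\big)\le\frac1m\sum O(f^k(x))$. Integrating, using invariance, and letting $m\to\infty$ with dominated convergence again gives $O(p)\le\int O\,\dmu$. This is cleaner, so I would present it and keep the barycenter argument only as a remark.

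The main obstacle is a technical one: making sense of averaging points in $X$ and of the barycenter. The hypothesis that $X$ is a convex subset of a linear space ensures $\frac1m\sum_{k=0}^{m-1}f^k(x)\in X$, so $O$ can be evaluated there. The only analytic input is dominated convergence, which is justified by boundedness of $O$ on compact $X$. The hypotheses that $p$ is a unique interior fixed point are not needed beyond identifying the limit in \eqref{thm2:eq-1}, and I will remark on this.
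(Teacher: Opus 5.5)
Your proof is correct, and it takes a genuinely different and more economical route than the paper's.

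The paper's argument runs as follows. It first handles ergodic $\mu$ by picking a point that is typical both for Birkhoff's theorem and for \eqref{thm2:eq-1}. It then passes to finite convex combinations of ergodic measures. Finally it reaches arbitrary invariant $\mu$ via Krein--Milman density and weak-* limits, with a separate and redundant case for $p$ globally attracting. For part (2) it applies Jensen's inequality to the limits of the time averages $\hat O_m(x)$ and repeats the same chain.

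You instead integrate directly. Invariance gives $\int O\,d\mu=\int \hat O_m\,d\mu$ for every $m$. Affinity (resp.\ convexity) compares $\hat O_m(x)$ with $O\bigl(\frac1m\sum_{k<m}f^k(x)\bigr)$, whose argument stays in $X$ by convexity. Dominated convergence then finishes.

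Your route buys two things.
\begin{enumerate}
\item You use the hypothesis exactly as stated: convergence $\mu$-almost everywhere for the given $\mu$, which is the reading fixed by the paper's footnote. The paper's density step instead needs \eqref{thm2:eq-1} almost everywhere for the ergodic components of the approximating measures $\lambda_i$. These need not be absolutely continuous with respect to $\mu$, so the stated hypothesis does not supply this.
\item In part (2), your integrated inequality does not require $\lim_m \hat O_m(x)$ to exist. The paper's Jensen step does require it, and obtains it only through Birkhoff's theorem.
\end{enumerate}

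The paper's route mainly serves its ergodic-theoretic narrative of time averages along typical orbits versus space averages. For affine or convex $O$, though, the relevant pointwise information is already immediate from the hypothesis, so nothing is lost with your approach. Your remark that uniqueness and interiority of $p$ play no role is also accurate; the paper's proof does not genuinely use them either.
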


\begin{proof}
We begin with the proof of the assertion 1.
Since $O$ is linear/affine, we get from \eqref{thm2:eq-1} that
\begin{equation}\label{thm2:eq0}
 \lim\limits_{m \to \infty} \frac{1}{m} \sum_{k=0}^{m-1} O(f^k(x)) = O(p).
\end{equation}

If the fixed point $p$ is globally attracting, then $\delta_p$ is the only invariant measure. Thus, by Theorem \ref{thm1} and by the equation \eqref{thm2:eq0} we get
\[
 \int O\: \dd_p = \lim\limits_{m \to \infty} \frac{1}{m} \sum_{k=0}^{m-1} O(f^k(x)) = O(p).
\]

We now consider the case when the fixed point $p$ is not globally attracting. First, assume that $\mu$ is an ergodic measure.
Recall that Birkhoff's theorem holds for all $x$ except for a set of measure zero. So take $x$ from the intersection of the sets where \eqref{thm2:eq-1} holds and Birkhoff's theorem holds (this intersection is nonempty). Then by using Birkhoff's theorem and the equation \eqref{thm2:eq0} we obtain
\begin{equation}\label{thm2:eq1}
 \int O\: \dmu = \lim\limits_{m \to \infty} \frac{1}{m} \sum_{k=0}^{m-1} O(f^k(x)) = O(p).
\end{equation}
Second, assume that $\mu$ is not an ergodic measure. Denote the set of probability measures invariant for $(X,f)$ by $\mathcal{M}_f$. 
The space $X$ is compact, thus  by Theorem \ref{thm1} the set $\mathcal{M}_f$ is nonempty, and by Banach-Alaoglu Theorem it is compact in the weak-* topology. Moreover, as a convex combination of invariant measures is an invariant measure, it is convex.
In addition, for the set $\mathcal{M}_f$ we know that the extreme points of $\mathcal{M}_f$ coincide with ergodic measures (see e.g., Theorem 4.4 in \cite{EWbook2011}).
In addition, by Krein-Milman Theorem convex combinations of extreme points are dense in $\mathcal{M}_f$.

If $\mu$ is a convex combination of ergodic measures, that is $\mu = \alpha_1 \nu_1 + \ldots + \alpha_l \nu_l$, where $\alpha_1, \ldots, \alpha_l \in [0,1]$, $\sum_{i=1}^l \alpha_i = 1$ and $\nu_1, \ldots, \nu_l$ are ergodic, then we get by \eqref{thm2:eq1} that
\begin{equation}\label{thm2:eq2}
 \int O\: \dmu = \sum_{i=1}^l \alpha_i \int O\: \dnu_i = \sum_{i=1}^l \alpha_i O(p) = O(p).
\end{equation}
On the other hand, if $\mu$ is not a convex combination of ergodic measures, then there exists a sequence of convex combination of ergodic measures $(\lambda_i)_{i \in \N}$ convergent to $\mu$. For each element of the sequence $\{\lambda_i\}_{i \in \N}$ the equation \eqref{thm2:eq2} holds, therefore the sequence $\int O\: \dl_i$ is constant. As a result,
\[
 \int O\: \dmu = \lim_{i \to \infty} \int O\: \dl_i = \lim_{i \to \infty} O(p) = O(p).
\]

We proceed to the proof of the assertion 2.
From Jensen's inequality and \eqref{thm2:eq-1} we obtain
\[
 \lim\limits_{m \to \infty} \frac{1}{m} \sum_{k=0}^{m-1} O(f^k(x)) \geqslant O \left( \lim\limits_{m \to \infty} \frac{1}{m} \sum_{k=0}^{m-1} f^k(x) \right) = O(p),
\]
when $O$ is convex, and
\[
 \lim\limits_{m \to \infty} \frac{1}{m} \sum_{k=0}^{m-1} O(f^k(x)) \leqslant O \left( \lim\limits_{m \to \infty} \frac{1}{m} \sum_{k=0}^{m-1} f^k(x) \right) = O(p),
\]
when $O$ is concave. The rest of the proof follows the steps of the proof of the assertion 1.
\end{proof}

In other words, Theorem \ref{thm2} shows that for the special case of linear observables the space-average observation has a unique value that is independent of the choice of an invariant measure.

Next, we will provide a useful definition in terms of making
measurements of the system state.

For a given initial state $x$ we define \emph{the average of the
first $m>0$ observations} by
\begin{equation}
\hat{O}_m(x) := \frac{1}{m} \sum_{k=0}^{m-1} O(f^{k}(x)).
\end{equation}

and \emph{the time-averaged observation} as
\[
\hat{O}(x) := \lim_{m \rightarrow \infty} \hat{O}_m(x) =
\lim_{m \rightarrow \infty} \frac{1}{m} \sum_{k=0}^{m-1} O(f^{k}(x))
\]
when the appropriate limit exists.

\begin{proposition}\label{averages2}
Let Assumption \ref{as1} hold. Assume that the dynamical system $(X,f)$ has an attracting periodic orbit of period $n > 0$ that attracts almost all initial states.
Then for almost all points $x \in X$
\[
\lim_{i\rightarrow \infty} \hat{O}(f^i(x)) = \hat{O}(z),
\]
where $z$ is an arbitrary point of the attracting periodic orbit.
\end{proposition}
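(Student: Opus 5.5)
Let $P=\{z_0,z_1,\dots,z_{n-1}\}$ be the attracting periodic orbit, with $z_{j+1}=f(z_j)$ and indices taken mod $n$. The plan is to show that, for every $x$ in the basin of $P$, the time-averaged observation $\hat{O}(x)$ exists and equals
\[
\bar O:=\frac1n\sum_{j=0}^{n-1}O(z_j).
\]
Applied at the point $z$, this gives $\hat O(z)=\bar O$ for every $z\in P$, since $f^k(z)$ cycles through $P$. Note that the Ces\`aro average of an $n$-periodic sequence over $m$ terms differs from $\bar O$ by at most $2n\max|O|/m$, so this limit exists. Because the basin of $P$ is forward invariant, each $f^i(x)$ also lies in the basin. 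Hence $\hat O(f^i(x))=\bar O=\hat O(z)$ for all $i$, and the limit in $i$ is trivial. The whole content is therefore the claim $\hat O(x)=\bar O$ for points $x$ attracted to $P$, which by hypothesis means almost every $x$.

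First I would make precise what ``attracted to $P$'' gives pointwise. By definition, some $z_{j_0}$ is an attracting fixed point of $f^n$ with a neighborhood $U$. If $x$ is attracted, then some iterate $f^{N}(x)$ lies in $U$, and therefore $f^{N+qn}(x)\to z_{j_0}$ as $q\to\infty$. Continuity of $f,\dots,f^{n-1}$ then gives $f^{N+qn+r}(x)\to z_{j_0+r}$ for each $r=0,\dots,n-1$. Equivalently, $d\big(f^k(x),z_{k-N+j_0}\big)\to0$ as $k\to\infty$.

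Next I would compare the two averages. Continuity of $O$, together with the uniform continuity of $O$ on the compact space $X$, gives $|O(f^k(x))-O(z_{k-N+j_0})|\to0$. Ces\`aro averages of a null sequence tend to zero, and finitely many initial terms contribute $O(1/m)$. So $\hat O_m(x)$ minus the average of the periodic sequence $O(z_{k-N+j_0})$ tends to $0$. The latter average tends to $\bar O$ by the periodicity remark above, so $\hat O(x)=\bar O$.

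The main obstacle is not analytic but a matter of interpretation. The phrase ``attracts almost all initial states'' must be read as convergence $d(f^k(x),P)\to0$ with the phase locked. I plan to derive the phase-locking from the local definition of an attracting orbit, using the neighborhood $U$ of a single point $z_{j_0}$ as above, rather than from mere convergence of the distance to $P$. One could alternatively argue via Theorem~\ref{thm1}: every limit of the empirical measures is invariant and supported on $\omega(x,f)=P$, and the only invariant measure on a periodic orbit is the uniform one. That identifies all subsequential limits of $\hat O_m(x)$ with $\bar O$. I would mention this as a cleaner route that avoids the phase bookkeeping.
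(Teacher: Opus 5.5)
Your proof is correct, but it is organized differently from the paper's. The paper works with finite windows. Its Step 1 shows that for each fixed $m$ divisible by $n$, $\hat O_m(f^i(x))\to\hat O_m(z)$ as $i\to\infty$, by splitting the orbit into $n$ subsequences converging to the points of the cycle. Its Step 2 combines this with $\hat O(z)=\hat O_m(z)$ and the estimate $|\hat O_m(x_i)-\hat O(x_i)|<\varepsilon/2$ for $m$ large.

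You instead note that the limit in $i$ is vacuous once $\hat O(y)=\bar O$ is known for every $y$ in the forward-invariant basin. You then prove that pointwise identity directly from the phase-locked convergence $d(f^k(x),z_{k-N+j_0})\to 0$ and Ces\`aro averaging of a null sequence.

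Your route gives two things the paper's argument does not supply explicitly:
\begin{itemize}
\item It proves that the limit defining $\hat O(x)$ exists, which the paper's Step 2 takes for granted.
\item It avoids interchanging $i\to\infty$ with $m\to\infty$. In the paper's estimate the $m$ needed for the second term depends on $i$, while the threshold on $i$ from Step 1 depends on $m$. Closing that argument really requires Step 1 uniformly in $m$, which is exactly what your tail estimate $|O(f^k(x))-O(z_{k-N+j_0})|\to 0$ provides.
\end{itemize}
The paper's formulation, for its part, isolates the convergence of finite-window averages along the tail of the orbit, a statement your argument implies but does not state.

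Your alternative via Theorem~\ref{thm1} is also valid and is the shortest route: every weak-* limit of the empirical measures is invariant and lives on $\omega(x,f)=P$, and equidistribution on the cycle is the only invariant probability measure supported on a single cycle.

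A minor simplification: to put some iterate $f^N(x)$ into the neighborhood $U$ of $z_{j_0}$, you can simply use $z_{j_0}\in\omega(x,f)$.
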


\begin{proof}
We are going to show the proof in two steps.

Step 1. We will show that if $n$ divides $m$, then for almost all initializations, the limit of the $m$-th iterate time-averaged observations for any continuous function $O$ is captured by its time average over the attracting periodic orbit, or in other words, that
for almost all points $x \in X$
\begin{equation}\label{thm_av-eq1}
\lim_{i \rightarrow \infty} \hat{O}_m(f^i(x)) =  \hat{O}_m(z).
\end{equation}
Clearly, if $n$ divides $m$ then for every choice of the element $z$
of the periodic orbit the averages $\hat{O}_m(z)$ are equal to each
other. Let $x_i = f^i(x)$ be a trajectory attracted to the orbit of
$z$. It means that the sequence $(x_i)$ can be split into disjoint
subsequences $(x_{i_j})_{j \in \{1,2,\ldots,n\}}$ such that each
$(x_{i_j})$ converges to an element of the attracting periodic orbit
of $z$. Because the observable $O$ is continuous, we have that the
subsequences $\hat{O}_m(x_{i_j})$, for $j \in \{1,2,\ldots,n\}$,
converge to the same limit $\hat{O}_m(z)$. 
Thus, $\hat{O}_m(x_{i})$ also converges to $\hat{O}_m(z)$.

Step 2. Let $\varepsilon > 0$ and let $x_i = f^i(x)$ be a trajectory attracted to the orbit of
$z$. We are going to show that
\[
 | \hat{O}(z) - \hat{O}(x_i) | < \varepsilon
\]
for $i$ sufficiently large.

Choose $m>0$ such that $m=0 \mod (n)$.  Then because $z$ is an element of the periodic orbit of period $n$ we have that $\hat{O}(z) = \hat{O}_m(z)$.
Second, from the definition of the time-average $\hat{O}$ we get that
\begin{equation}\label{thm_av-eq2}
 | \hat{O}_m (x_i) - \hat{O} (x_i) | < \frac{\varepsilon}{2}
\end{equation}
for $m$ sufficiently large. Therefore, by combining \eqref{thm_av-eq1}, \eqref{thm_av-eq2} and the fact that $\hat{O}(z) = \hat{O}_m(z)$ we obtain
\begin{align*}
 \big| \hat{O} (z) - \hat{O} (x_i) \big| &= \big| \hat{O} (z) - \hat{O}_m (x_i) + \hat{O}_m (x_i) - \hat{O} (x_i) \big| \\
 &\leqslant \big| \hat{O}_m (z) - \hat{O}_m (x_i) \big| +  \big| \hat{O}_m (x_i) - \hat{O} (x_i) \big| < \frac{\varepsilon}{2} + \frac{\varepsilon}{2} = \varepsilon
\end{align*}
for $i$ and $m$ sufficiently large.
\end{proof}

The result of Proposition \ref{averages2} implies that for almost all initializations, the time-averaged
observations for any continuous function $O$ is captured by its time average over the attracting periodic orbit.

Now, if we look for an absolutely continuous invariant measure it is useful to guarantee that the support of this measure is simple. This happens for maps with "good" properties:

\begin{proposition}
\label{lem:supcycle}
    Let $f\colon [0,1]\to [0,1]$ be a smooth multimodal map with negative Schwarzian derivative and non-degenerated critical points. Let $\mu$ be an absolutely continuous (ergodic) measure. Then the support of $\mu$ is a cycle of intervals.
\end{proposition}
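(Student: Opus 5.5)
The plan is to combine the classical structure theory for S-unimodal and S-multimodal maps with a few elementary properties of absolutely continuous invariant measures. The key external ingredients are:
\begin{itemize}
\item the absence of wandering intervals for smooth multimodal maps with non-degenerate critical points (Martens--de Melo--van Strien);
\item the Singer-type consequence of $Sf<0$: every attracting or neutral periodic orbit attracts a critical point.
\end{itemize}
Write $\mu$ for the given ergodic absolutely continuous invariant measure, with density $\rho=d\mu/d\lambda$, and $S=\operatorname{supp}\mu$.

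\textbf{Step 1 ($S$ is a finite union of intervals).} I would first show that $S$ has nonempty interior. Let $A=\{\rho>0\}$, which has positive Lebesgue measure. Take a Lebesgue density point $x$ of $A$ that is not eventually mapped to a critical point. For a small interval $J$ around $x$, consider the iterates $f^k(J)$.
\begin{itemize}
\item If $J$ were a wandering interval, or if it were attracted to a periodic attractor, we would get a contradiction. Wandering intervals do not exist. Attraction to a periodic orbit is impossible because a set of positive $\mu$-measure cannot be pulled into a basin of a periodic attractor while $\mu$ stays absolutely continuous: by ergodicity, $\mu$ would then be the atomic measure on that orbit.
\item So some images $f^k(J)$ and $f^l(J)$ with $k<l$ must overlap. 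Hence $J$ lies in a periodic interval $K$ with $f^p(K)\subset K$.
\end{itemize}
Then $C=K\cup f(K)\cup\dots\cup f^{p-1}(K)$ is a forward-invariant finite union of intervals. It carries positive $\mu$-measure, so by ergodicity $\mu(C)=1$. This already gives $S\subset \overline{C}$.

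\textbf{Step 2 ($S$ is exactly a cycle of intervals).} To get equality, I would choose the cycle minimal and invoke the standard spectral decomposition for S-multimodal maps. The complement of the closure of the union of basins of periodic attractors and of the interiors of homterval orbits consists of the non-wandering set. Each transitive piece of it is either a Cantor set (the solenoidal or hyperbolic type, which has zero Lebesgue measure) or a cycle of intervals on which $f^p$ is topologically transitive.
\begin{itemize}
\item $\mu$ is absolutely continuous, so it gives zero mass to the zero-measure pieces and to the basins of attractors. It must therefore be supported in a transitive cycle of intervals $C'$.
\item Transitivity of $f$ on $C'$, together with invariance of $S$ ($f(S)\subset S$, closed), forces $S=C'$. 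Indeed, the invariant closed set $S\cap C'$ has nonempty interior by Step 1, and the forward orbit of any open subset of a transitive cycle is dense in the cycle.
\end{itemize}
Finally, $S$ is closed and equals a union of $p$ closed intervals permuted cyclically by $f$, which is exactly a cycle of intervals.

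\textbf{Main obstacle.} The hardest step is Step 1's exclusion of pathological behaviour of a small interval around a density point. This is where $Sf<0$ and the non-degeneracy of the critical points are genuinely used. Rather than reprove the Martens--de Melo--van Strien no-wandering-intervals theorem and the spectral decomposition, I would cite them from de Melo--van Strien's monograph and only verify that their hypotheses match ours.
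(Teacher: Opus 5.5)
There is a genuine gap, and it sits where the real difficulty of the statement lies.

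Step~1 announces that $S=\operatorname{supp}\mu$ has nonempty interior, but the argument only produces a cycle of intervals $C$ with $\mu(C)=1$, i.e.\ the inclusion $S\subset\overline{C}$. A density point $x$ of $\{\rho>0\}$ does lie in $S$, but nothing places the interval $J$ around it inside $S$. (A minor slip: the overlap of $f^k(J)$ and $f^l(J)$ puts $f^k(J)$, not $J$, into a periodic interval. This is harmless, since $\mu(f^k(J))\ge\mu(J)>0$.)

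The last line of Step~2 relies on ``$S\cap C'$ has nonempty interior by Step~1'', so $S=C'$ is never established. The spectral decomposition cannot repair this. It only tells you that $\mu$ lives in some transitive cycle $C'$. The case you must exclude is $\mu$ concentrated on a closed, invariant, \emph{nowhere dense} subset of a transitive cycle, and such sets can have positive Lebesgue measure. For example, a wild attractor $\omega(c)$ (these exist for Fibonacci unimodal maps whose critical point has high order) is a minimal Cantor set. It sits inside a transitive cycle, attracts Lebesgue-a.e.\ point there, and is not a separate piece of the decomposition of the non-wandering set. Now take the set of points whose orbits never enter a fixed open interval $U$ with $\overline{U}\cap\omega(c)=\emptyset$. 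It is closed, forward invariant, nowhere dense by transitivity, yet of positive Lebesgue measure. So the purely topological facts you have about $S$ (closed, invariant, $\lambda(S)>0$, inside a transitive cycle) do not give interior; your Step~2 reasoning would apply verbatim in that situation.

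What is missing is a measure-theoretic input showing that an absolutely continuous ergodic measure cannot live on such a Cantor-type set, and this is exactly the paper's proof. By the Blokh--Lyubich decomposition~\cite{blokh1990decomposition}, if $S$ is not a cycle of intervals then $\mu$ lives on a wild attractor, a minimal Cantor set $\omega(c)$ of a critical point. By Proposition~24 of~\cite{bruin2010lebesgue}, such an attractor has Lebesgue measure zero, contradicting $\mu\ll\lambda$. Concretely: by ergodicity, $\mu$-a.e.\ $x\in S$ has $\omega(x)=S$; these points have positive Lebesgue measure, and the decomposition classifies $\omega$-limit sets of Lebesgue-typical points.

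Once you know $\operatorname{int}S\neq\emptyset$, your endgame does work. A component $I$ of $\operatorname{int}S$ has $\mu(I)>0$, so it is neither wandering nor in a periodic basin. Its images then generate a cycle of intervals inside $S$, and ergodicity plus minimality of the support force equality. Note also that the zero Lebesgue measure of the Cantor pieces you invoke in Step~2 is itself a nontrivial theorem of the same kind, not an automatic remark.
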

\begin{proof}
  By \cite{blokh1990decomposition} we know that if the support of $\mu$ is not a cycle of intervals, then the only possibility is that the measure lives on a wild attractor (which is the $\omega$-limit set of a critical point that is minimal). Then by Proposition 24 of \cite{bruin2010lebesgue} this wild attractor has Lebesgue measure zero, which leads to a contradiction.
\end{proof}

\section{MWU driven game dynamics}
\label{sec:MWU}
In this article we aim to introduce natural measures in the game-theoretic perspective. Thus, we will describe natural measures for the game dynamics introduced by the multiplicative weights update algorithm. We will also show that game dynamics arising from this simple algorithm depicts all theoretically possible behaviors of one-dimensional dynamical systems including existence of natural invariant measures. 

\subsection{Settings}
Let us consider a two-strategy congestion game ~\cite{rosenthal73} with a population (continuum) of players (agents) of size $N>0$, where all use the multiplicative weights update learning algorithm to update their strategies \cite{Arora05themultiplicative}. Each of the players controls an infinitesimal small fraction of the flow. The cost of each path (link, route, or strategy) will be assumed to be proportional to the load. By denoting $c(j)$ the cost of selecting the strategy number $j$ (when $x$ fraction of the agents choose the first strategy), if the coefficients of proportionality are $\alpha,\beta>0$, we obtain
\begin{equation}\label{cost}
c(1)=\alpha N x, \hspace{50pt} c(2)=\beta N (1-x).
\end{equation}

Without loss of generality we assume that $\alpha+\beta=1$. Therefore, the values $\alpha$ and $\beta=1-\alpha$  indicate how different the path costs are from each other. 
At time $n+1$, the players know the cost of the strategies at
time $n$ (equivalently, the probabilities $(x_n, 1-x_n)$) and update
their choices. Since we have a continuum of agents, 
the realized flow (split) is accurately described by the probabilities $(x_n, 1-x_n)$.
The algorithm for updating the probabilities that we focus on is the
\emph{multiplicative weights update} (MWU) the ubiquitous learning algorithm widely employed in Machine Learning, optimization, and game theory (also known as Normalized Exponentiated Gradient, Hedge algorithm) \cite{Arora05themultiplicative,auer1995gambling}:

\begin{equation}\label{mwu}
x_{n+1}=\frac{x_n(1-\eps)^{c(1)}}{x_n(1-\eps)^{c(1)}+
(1-x_n)(1-\eps)^{c(2)}}
           =\frac{x_n}{x_n+
(1-x_n)(1-\eps)^{c(2)-c(1)}},
\end{equation}
with $\eps\in(0,1)$, being treated as the common learning rate of
all players.

By introducing the new variables $a=N \ln\left(\frac1{1-\eps}\right)$, and $b=\beta$ we obtain game dynamics driven by the one-dimensional map:
\begin{equation}\label{map}
\begin{aligned}
f_{MW}(x)&=\frac{x}{x+(1-x)\exp(a(x-b))}.\\
\end{aligned}
\tag{MW}
\end{equation}
This map has already been studied and Li-Yorke chaos was shown in \cite{chotibut2021family,CFMP2019}.
In particular, this map is increasing when $a\leq 4$. When $a>4$ it has two critical points given by
\[c_l=\frac 12-\sqrt{\frac 14-\frac 1a},\;\; c_r=\frac 12+\sqrt{\frac 14-\frac 1a},\]
and negative Schwarzian derivative \cite{chotibut2021family}. Fixed points of $f_{MW}$ are $0,1$ and $b$. While $0$ and $1$ are always repelling, the interior fixed point (being also Nash equilibrium of the game) is globally attracting as long as $a<\frac{2}{b(1-b)}$ (see Theorem 3.8 from \cite{chotibut2021family}). But when $a>\frac{2}{b(1-b)}$ the interior fixed point become repelling, thus excluding the possibility of the convergence of game dynamics to the equilibrium and undermining any equilibrium analysis. 

To study statistical properties of the game dynamics we will look at the conjugate map to \eqref{map}.
introduced by taking $y=\frac 1a \ln\frac{x}{1-x}$ and \[F(y)=\frac 1a
\ln \frac{f_{MW}(x)}{1-f_{MW}(x)}.\] Then the family  $f_{MW}$ on $(0,1)$ is conjugate to the map  
$F\colon \R\to\R$, given by
\begin{equation}\label{e2}
F(y)=y+b-\frac1{e^{-ay}+1},
\end{equation}
parametrized by $a>0$ and $b\in (0,1)$.\footnote{This map is known in the literature, see \cite{eirola1996chaotic,kryzhevich2021bistability,bielawski2025interval}.} 

By the conjugacy argument, this map has two critical points for $a>4$, with interesting dynamics for $a>\frac{2}{b(1-b)}$.
We denote the corresponding left critical point, right critical point and the fixed point of the map $F$ by $y_l$, $y_r$ and $y_f$ respectively.
Let us recall here result from \cite{bielawski2025interval} (Lemmas 3.1 and 3.2 combined).

\begin{lemma}\label{lemma-recall}
Let $b = k/n$, where $k,n$ are coprime natural numbers and $k<n$. Then the map given by \eqref{e2} has an attracting periodic orbit of period $n$ for sufficiently large values of $a$.
Moreover, the attracting periodic orbit attracts trajectories of Lebesgue almost all points from $\R$.
\end{lemma}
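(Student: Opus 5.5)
The plan is to study the map $F(y)=y+b-\frac{1}{e^{-ay}+1}$ in the limit $a\to\infty$. In this limit it converges, away from $y=0$, to the piecewise translation
\[
G(y)=\begin{cases} y+b, & y<0,\\ y+b-1, & y>0.\end{cases}
\]
Reduced modulo the jump, $G$ is the rotation by $b$. We then take $b=k/n$ with $\gcd(k,n)=1$. The rotation by $k/n$ has every orbit periodic of period $n$. Concretely, start at a point $y_0$ slightly to the left of $0$ and iterate $G$. Because $k,n$ are coprime, the itinerary makes exactly $k$ steps of type ``$-1$'' in $n$ steps, and we return to $y_0$. The $n$ points of this orbit stay at distance at least roughly $1/n$ from the discontinuity at $0$, except for the one point we place near $0$.

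Next we exploit the critical points of $F$. The derivative is $F'(y)=1-\frac{a e^{-ay}}{(e^{-ay}+1)^2}$. This is $\approx 1$ for $|y|\gg 1/a$ and very negative near $0$. The critical points $y_l<0<y_r$ lie at distance $\approx \frac{\ln a}{a}$ from $0$. So the plan is to find a periodic orbit of $F$ of period $n$ passing through a tiny neighbourhood of a critical point, say $y_l$.

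To do this, follow the $G$-orbit of $y_l$. Off the small window $|y|\lesssim \frac{\ln a}{a}$, the map $F$ is $C^1$-close to the translation by $b$ or $b-1$, with error $O(e^{-a\delta})$. Hence $F^{n}(y_l)=y_l+O(\text{small})$. We then use the intermediate value theorem on a short interval $J$ around $y_l$. The branch of $F$ near the critical value is folded, and $F^n$ maps $J$ across itself, which gives a fixed point $p$ of $F^n$ in $J$. Its multiplier is
\[
(F^n)'(p)=F'(p)\prod_{j=1}^{n-1}F'(F^j(p)).
\]
Here $F'(p)\approx 0$ because $p$ is near the critical point, and the remaining factors are $\approx 1$. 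So $|(F^n)'(p)|<1$ and the orbit is attracting. It has period exactly $n$ because its itinerary has the rotation structure, so no smaller period is possible. The step I expect to be hardest is making this quantitative: we need $F^n(J)\supset J$ together with $|F'|<1$ near $p$, while the other $n-1$ orbit points stay outside the window $|y|\lesssim \frac{\ln a}{a}$. This works because those points are at distance $\approx j/n$ from $0$ and the window shrinks as $a\to\infty$. Tracking the errors in $a$ carefully should give an explicit threshold $a_0(n)$.

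For the almost-every statement, we transfer to $f_{MW}$ via the conjugacy. There it is a smooth map of $[0,1]$ with negative Schwarzian derivative and two critical points, and $0,1$ are repelling. By Singer-type arguments, each attracting cycle attracts a critical point. By a symmetric argument, the orbit of $c_r$ (respectively $y_r$) is attracted to the same cycle. Since both critical orbits go to the attracting cycle, there is no wild attractor or absolutely continuous measure (cf.\ Proposition~\ref{lem:supcycle} and \cite{blokh1990decomposition}). Mañé's hyperbolicity theorem then implies that the complement of the basin of the attracting orbit is a hyperbolic repelling set, together with the repelling points $0,1$. This set has Lebesgue measure zero, so Lebesgue almost every point is attracted. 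The remaining check is that both critical points are captured. This holds because $F(y_l)$ and $F(y_r)$ land, up to $O(1/a)$, at the same point $\approx b$ (the plateau value), and that point lies in the immediate basin of the constructed cycle.
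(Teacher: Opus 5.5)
The paper does not prove this lemma itself; it quotes Lemmas 3.1 and 3.2 of \cite{bielawski2025interval}. Your argument therefore has to stand on its own, and its central step fails: there is no period-$n$ orbit passing near the critical point $y_l$.

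Write $\sigma(u)=1/(e^{-u}+1)$. Then $F(z)-z=b-\sigma(az)$, and hence, exactly, $F^n(y)-y=\sum_{j=0}^{n-1}\bigl(b-\sigma(aF^j(y))\bigr)$. For $y$ near $y_l\approx-\frac{\ln a}{a}$, the points $F^j(y)$, $1\le j\le n-1$, shadow the rotation orbit of $0^-$. They stay at distance roughly $\frac1n$ from $0$, and exactly $k$ of them are positive. Since $nb=k$, this gives
\[
F^n(y)-y=-\sigma(ay)+O\bigl(na^2e^{-a/n}\bigr),\qquad \sigma(ay_l)=\tfrac12-\sqrt{\tfrac14-\tfrac1a}>\tfrac1a .
\]
So the ``$O(\text{small})$'' in $F^n(y_l)=y_l+O(\text{small})$ is a defect of size about $\frac1a$ with a definite sign. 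It is created at the very first step and never compensated. It is not negligible on the scale $|y-y_l|=O(\frac1a)$ on which $|F'|$ is small.

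Moreover, $y_l$ is a local maximum of $F^n$, because $F^{n-1}$ is increasing near $F(y_l)$. Also $F^n(y)-y\approx-\sigma(ay)<0$ to the left of $y_l$, all the way to about $-\frac1{2n}$. Hence $F^n-\mathrm{id}<0$ on a whole neighbourhood of $y_l$. So $F^n$ does not map $J$ across itself, there is no fixed point $p$ of $F^n$ near $y_l$, and the estimate ``$F'(p)\approx 0$'' has nothing to apply to.

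The attracting orbit actually sits midway between consecutive points of the rotation orbit of $0$. Balancing the two dominant defects, $\sigma(ay)\approx 1-\sigma(a(y+\frac1n))$, puts a periodic point near $-\frac1{2n}$, so the orbit lies near the points $(m+\frac12)/n$. For example, for $b=\frac12$ the map $F$ is odd and the orbit is $\{\pm p\}$ with $p\approx\frac14$. Its multiplier is close to $1$, not to $0$.

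The missing idea is that attraction comes from $F'(y)=1-a\sigma'(ay)<1$ everywhere, together with $F'>0$ off $[y_l,y_r]$. Take $K=[-\frac1n+\frac{2\ln a}{a},\,y_l]$.
\begin{itemize}
\item For $y\in K$, the iterates $F^j(y)$, $1\le j\le n-1$, avoid $[y_l,y_r]$, so $F^n$ is increasing on $K$ with $0\le (F^n)'<1$.
\item $F^n-\mathrm{id}$ is positive at the left endpoint of $K$ and negative at $y_l$.
\item Hence $F^n(K)\subset K$, and $F^n$ has a unique fixed point in $K$ that attracts all of $K$.
\item Its orbit has one point in each gap between consecutive multiples of $\frac1n$, so its period is exactly $n$, and $y_l$ lies in its immediate basin.
\item The symmetric argument on $[y_r,\frac1n-\frac{2\ln a}{a}]$ shows that the same orbit captures $y_r$.
\end{itemize}

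This replaces the justification in your last paragraph, which is wrong as stated. $F(y_r)\approx b-1$, not $b$. In fact $F^2(y_l)\approx 2b-1-\frac{\ln a}{a}$ and $F^2(y_r)\approx 2b-1+\frac{\ln a}{a}$ lie in different components of the immediate basin. These components are separated by points outside the immediate basin lying within $O(\frac{\ln a}{a})$ of $2b-1$, so the closeness of the two critical values proves nothing.

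Once both critical points are known to be captured, your Singer / no-wandering-intervals / Ma\~n\'e argument, applied on a compact absorbing interval of $\mathbb{R}$, does give the almost-everywhere statement.
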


Since the existence of an attracting periodic orbit is a property of a dynamical system that caries through the topological conjugacy, we have that the result of Lemma \ref{lemma-recall} holds for the MWU dynamics as well.

\subsection{Natural measures for MWU dynamics}
\label{sec:theoMWU}
By \cite{chotibut2021family} we know that the time-average trajectory of the MWU dynamics converges to the interior fixed point (the Nash equilibrium of the game) for every starting point $x \in (0,1)$. Thus, the assumptions of Theorem \ref{thm2} are satisfied. Moreover, if the observable $O$ is linear/affine, then the assumptions of Theorem \ref{thm1} are also satisfied. As a result, we have the following observation.

\begin{corollary}
Let $O:[0,1]\to\R$ be continuous function (an observable) and $x \in (0,1)$.
 \begin{enumerate}
\item If $O$ is linear/affine, then there exists a probability measure $\mu$ on $\omega(x,f_{MW})$ and
 $\int O\: \dmu = O(b)$.

\item If $\mu$ is a probability measure on $[0,1]$ invariant for $([0,1],f_{MW})$ such that $\mu(\{0,1\})=0$ and $O$ is convex (concave), then
\[
 \int O\: \dmu \underset{(\leqslant)}{\geqslant} O(b).
\]
\end{enumerate}
\end{corollary}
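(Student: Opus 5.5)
The plan is to combine Theorem \ref{thm1} and Theorem \ref{thm2} with the fact, recalled from \cite{chotibut2021family}, that time averages of MWU trajectories converge to the Nash equilibrium: for every $x\in(0,1)$,
\[
\lim_{m\to\infty}\frac1m\sum_{k=0}^{m-1}f_{MW}^k(x)=b .
\]

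For part 1, fix $x\in(0,1)$ and an affine $O(t)=ct+d$. The Birkhoff-type sums then satisfy
\[
\frac1m\sum_{k=0}^{m-1}O(f_{MW}^k(x))=c\cdot\frac1m\sum_{k=0}^{m-1}f_{MW}^k(x)+d\to cb+d=O(b).
\]
Hence hypothesis \eqref{eq1} of Theorem \ref{thm1} holds along the full sequence $N_i=i$, with $y=O(b)$. Theorem \ref{thm1} then supplies a probability measure $\mu$ supported on $\omega(x,f_{MW})$, invariant for $f_{MW}$, with $\int O\,d\mu=O(b)$. Nothing beyond the time-average convergence is needed here.

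For part 2, I would follow the proof of Theorem \ref{thm2} with $X=[0,1]$, which is convex and compact. The point $b$ is the unique interior fixed point, and the endpoints $0,1$ are the only other fixed points. Theorem \ref{thm2} needs \eqref{thm2:eq-1} to hold $\mu$-almost everywhere. It holds at every point of $(0,1)$, and the exceptional set $\{0,1\}$ is $\mu$-null by the hypothesis $\mu(\{0,1\})=0$. So the hypotheses of Theorem \ref{thm2} are met, and assertion 2 gives $\int O\,d\mu\ge O(b)$ for convex $O$, and $\le$ for concave $O$. Concretely, the argument runs as follows.

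\begin{itemize}
\item First take $\mu$ ergodic. Choose a point $x\in(0,1)$ that is Birkhoff-generic for the continuous function $O$.
\item Jensen's inequality applied to the empirical averages gives
\[
\frac1m\sum_{k=0}^{m-1} O(f_{MW}^k(x))\ \ge\ O\Big(\frac1m\sum_{k=0}^{m-1} f_{MW}^k(x)\Big).
\]
Since $O$ is continuous and the inner average tends to $b$, the right-hand side tends to $O(b)$, so $\int O\,d\mu\ge O(b)$.
\item Extend to convex combinations of ergodic measures by linearity.
\item Extend to a general invariant $\mu$ with $\mu(\{0,1\})=0$ by weak-* density (Krein--Milman) and continuity of $\mu\mapsto\int O\,d\mu$. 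The inequality is preserved because it is a closed condition.
\end{itemize}

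The main obstacle is the ergodic-decomposition step. The approximating ergodic measures may charge $\delta_0$ or $\delta_1$, and at those points the time average is $0$ or $1$, not $b$, so the inequality can fail for them. I would avoid the approximation altogether. Use the ergodic decomposition $\mu=\int\nu\,d\tau(\nu)$ directly. Since $\mu(\{0,1\})=0$, $\tau$-almost every ergodic component $\nu$ gives zero mass to $\{0,1\}$. Apply the ergodic case to each such $\nu$ and integrate over $\tau$; this yields $\int O\,d\mu\ge O(b)$ without ever passing through measures supported on the endpoints.
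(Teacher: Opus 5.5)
Your proposal is correct and follows the paper's route. Part 1 is the paper's application of Theorem~\ref{thm1} to the time-average convergence from \cite{chotibut2021family}. Part 2 is its appeal to Theorem~\ref{thm2}, with $\mu(\{0,1\})=0$ supplying the almost-everywhere hypothesis.

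You deviate only in the non-ergodic step, and your caution there is justified. The paper approximates $\mu$ by convex combinations $\lambda_i$ of ergodic measures (Krein--Milman) and asserts that \eqref{thm2:eq2} holds for each $\lambda_i$. That requires every ergodic measure appearing in $\lambda_i$ to satisfy \eqref{thm2:eq-1}. This fails for $\delta_0$ and $\delta_1$: for $O(t)=t$ one gets $\int O\,d\delta_0=0\neq b$. Your ergodic decomposition avoids this cleanly, since $\tau$-almost every component inherits $\nu(\{0,1\})=0$.

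The paper's route can also be patched for $f_{MW}$. Because $(0,1)$ is completely invariant, every ergodic measure other than $\delta_0,\delta_1$ lives on $(0,1)$ and therefore satisfies the hypothesis. Since $\{0,1\}$ is closed, the portmanteau theorem gives $\limsup_i\lambda_i(\{0,1\})\le\mu(\{0,1\})=0$, so the endpoint weights vanish in the limit.

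A lighter argument needs neither the decomposition nor Birkhoff's theorem. By invariance, $\int O\,d\mu=\int \hat O_m\,d\mu$. Pointwise Jensen gives $\hat O_m\ge O\bigl(\frac1m\sum_{k=0}^{m-1}f_{MW}^k\bigr)$. The integral of the right-hand side tends to $O(b)$ by bounded convergence, because the inner averages converge to $b$ on $(0,1)$, a set of full $\mu$-measure. When $O$ is affine the Jensen step is an equality, so one also gets $\int O\,d\mu=O(b)$.
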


In the special case when the observable is the cost function, we get for the congestion game that the observable $O$ is linear. Then we have the following fact:

\begin{corollary}
For any $x \in (0,1)$ there exists a probability measure $\mu$ on the $\omega$-limit set.  Moreover, the space-average cost (with respect to the measure $\mu$) is equal to the cost at the Nash equilibrium.
\end{corollary}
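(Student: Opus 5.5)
The plan is to apply the preceding corollary (item 1) to a suitable affine observable, so the whole proof comes down to choosing the right observable. In the congestion game \eqref{cost}, when a fraction $x$ of the agents uses the first strategy, the total cost (social cost) incurred by the population is
\[
C(x)=x\,c(1)+(1-x)\,c(2)=N\big(\alpha x^2+\beta(1-x)^2\big).
\]
This is quadratic in $x$, not linear, so it is not the observable we want. The natural linear objects are the individual strategy costs $c(1)=\alpha N x$ and $c(2)=\beta N(1-x)$, which are affine in $x$, and any fixed linear combination of them is affine as well. I therefore take $O$ to be one of these costs, or the cost vector componentwise, extended continuously to $[0,1]$; this matches the remark before the statement that ``the observable $O$ is linear''.

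Fix $x\in(0,1)$. By \cite{chotibut2021family}, the time averages $\frac1m\sum_{k=0}^{m-1}f_{MW}^k(x)$ converge to the interior fixed point $b$. Since $O$ is affine, $\hat O_m(x)=O\big(\frac1m\sum_{k<m}f_{MW}^k(x)\big)\to O(b)$, so the hypothesis \eqref{eq1} of Theorem~\ref{thm1} holds with $N_i=i$ and $y=O(b)$. Theorem~\ref{thm1} then yields an invariant probability measure $\mu$ on $\omega(x,f_{MW})$ with $\int O\,d\mu=O(b)$. This is exactly item 1 of the previous corollary.

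The same measure works for every affine observable simultaneously, because the Cesàro averages of $\delta_x$ along the chosen subsequence converge weak-* to a single $\mu$. Hence $\int c(j)\,d\mu=c(j)|_{x=b}$ for $j=1,2$. It remains to identify $b$ as the Nash equilibrium: $c(1)=c(2)$ means $\alpha x=\beta(1-x)$, i.e.\ $x=\beta=b$ (using $\alpha+\beta=1$). So the space-average cost equals the cost at the Nash equilibrium, and both strategies have the common value $\alpha\beta N$ there.

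The only delicate point is interpretive rather than technical. If ``cost'' is read as the social cost $C$, the statement fails as an equality, since $C$ is strictly convex. In that reading item 2 of the previous corollary gives only $\int C\,d\mu\ge C(b)$. I would therefore state explicitly that the cost in question is the per-strategy (linear) cost. One more small check: $\omega(x,f_{MW})$ avoids the repelling fixed points $0$ and $1$ for $x\in(0,1)$, or at least $\mu$ gives them no mass. This is not needed for item 1, since continuity of $O$ on $[0,1]$ is all that Theorem~\ref{thm1} uses.
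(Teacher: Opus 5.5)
Your proposal is correct and follows essentially the same route as the paper, which derives this corollary by taking the cost as a linear observable in item 1 of the preceding corollary, i.e., Theorem~\ref{thm1} combined with the convergence of time averages to $b$ from \cite{chotibut2021family}. Your explicit choice of the per-strategy costs $c(j)$ rather than the strictly convex social cost, and your check that $b$ is where $c(1)=c(2)$, make precise what the paper leaves implicit.
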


By applying Lemma \ref{lemma-recall} and conjugacy argument we see that the assumptions of Proposition \ref{averages2} are satisfied for $f_{MW}$, and therefore we have the following observation.

\begin{corollary}
Let $b = k/n$, where $k,n$ are coprime natural numbers and $k<n$.
Let $O\colon [0,1]\to\R$ be a continuous function (an observable). If the parameter $a$ is large enough, then there exists an attracting periodic orbit of $f_{MW}$ that attracts trajectories of Lebesgue almost all points from $[0,1]$. Moreover, for almost all points $x \in [0,1]$
\[
\lim_{i\rightarrow \infty} \hat{O}(f_{MW}^i(x)) = \hat{O}(z),
\]
where $z$ is an arbitrary point of the attracting periodic orbit.
\end{corollary}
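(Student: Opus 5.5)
The plan is to transfer Lemma~\ref{lemma-recall} from $F$ to $f_{MW}$ through the conjugacy, and then apply Proposition~\ref{averages2} with $X=[0,1]$, $f=f_{MW}$ and the given observable $O$.

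\textbf{Step 1: the conjugacy.} Set $h\colon(0,1)\to\R$, $h(x)=\frac1a\ln\frac{x}{1-x}$. This is a real-analytic diffeomorphism with $h\circ f_{MW}=F\circ h$ on $(0,1)$. Both directions of the identity follow by direct substitution into \eqref{map} and \eqref{e2}.

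\textbf{Step 2: transferring the attracting orbit.} Fix $b=k/n$. By Lemma~\ref{lemma-recall}, for all sufficiently large $a$, $F$ has an attracting periodic orbit $P$ of period $n$ that attracts Lebesgue almost every $y\in\R$. Then $h^{-1}(P)\subset(0,1)$ is a periodic orbit of $f_{MW}$ with the same period. It is attracting, since $h^{-1}$ maps a trapping neighbourhood $U$ of $P$ for $F^n$ to a trapping neighbourhood of $h^{-1}(P)$ for $f_{MW}^n$, and convergence is preserved by continuity of $h^{-1}$.

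\textbf{Step 3: transferring the basin (main obstacle).} The point needing care is that "Lebesgue almost all" is preserved. Topological conjugacy alone does not preserve null sets, but here $h$ is a $C^1$ diffeomorphism between open sets. Hence $h^{-1}$ maps Lebesgue null subsets of $\R$ to null subsets of $(0,1)$, since locally Lipschitz maps preserve null sets. Therefore the set of $x\in(0,1)$ not attracted to $h^{-1}(P)$, namely $h^{-1}$ of the exceptional set for $F$, is null. Adding the endpoints $\{0,1\}$, which are repelling fixed points and form a null set, shows that almost every $x\in[0,1]$ is attracted to $h^{-1}(P)$.

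\textbf{Step 4: conclusion.} Assumption~\ref{as1} holds, because $[0,1]$ is compact and $f_{MW}$ extends continuously to $[0,1]$ with $f_{MW}(0)=0$ and $f_{MW}(1)=1$. Proposition~\ref{averages2} then gives
\[
\lim_{i\to\infty}\hat O\bigl(f_{MW}^i(x)\bigr)=\hat O(z)
\]
for almost every $x\in[0,1]$ and any $z\in h^{-1}(P)$.

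The only subtlety in this step is that $\hat O(f_{MW}^i(x))$ must exist. For $x$ attracted to the periodic orbit, the Ces\`aro averages converge to the orbit average of $O$, by the same subsequence argument used in Step~1 of the proof of Proposition~\ref{averages2}. So the limit is well defined, and the corollary follows.
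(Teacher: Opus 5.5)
Your proposal is correct and follows the paper's route exactly: transfer Lemma~\ref{lemma-recall} to $f_{MW}$ through the conjugacy $y=\frac1a\ln\frac{x}{1-x}$, then invoke Proposition~\ref{averages2} on $[0,1]$. Your Step~3 makes explicit a point the paper passes over when it says the property ``carries through the topological conjugacy'': the ``Lebesgue almost all'' part transfers because the conjugacy is smooth, namely $h^{-1}$ is Lipschitz and so preserves null sets.
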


Finally, as the map $f_{MW}$ is bimodal with two critical points and has negative Schwarzian derivative when $a>4$, the assumptions of Proposition \ref{lem:supcycle} are satisfied, and thus we have the following observation.

\begin{corollary}
Let $\mu$ be an absolutely continuous (ergodic) measure on $[0,1]$. If $a>4$ then the support of $\mu$ is a cycle of intervals.
\end{corollary}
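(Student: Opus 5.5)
The statement follows from Proposition \ref{lem:supcycle}, so the plan is to check its hypotheses for $f_{MW}$ when $a>4$ and then handle the endpoints $0$ and $1$. Proposition \ref{lem:supcycle} needs three things: $f$ is a smooth multimodal self-map of $[0,1]$, it has negative Schwarzian derivative, and its critical points are non-degenerate.

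\textbf{Smoothness.} $f_{MW}$ is real analytic on a neighbourhood of $[0,1]$, since the denominator $x+(1-x)\exp(a(x-b))$ is positive there. It maps $[0,1]$ into itself and fixes $0$ and $1$.

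\textbf{Multimodality.} For $a>4$ the paper (following \cite{chotibut2021family}) lists exactly two critical points,
\[
c_l=\tfrac12-\sqrt{\tfrac14-\tfrac1a},\qquad c_r=\tfrac12+\sqrt{\tfrac14-\tfrac1a},
\]
both in $(0,1)$. So $f_{MW}$ is bimodal, hence multimodal.

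\textbf{Negative Schwarzian.} $Sf_{MW}<0$ is quoted from \cite{chotibut2021family}.

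\textbf{Non-degeneracy.} This is the one hypothesis not stated verbatim in the paper, and I expect it to be the only real work. I would compute the derivative directly. With $E=\exp(a(x-b))$ and $D=x+(1-x)E$,
\[
f_{MW}'(x)=\frac{E\,\bigl(1-a\,x(1-x)\bigr)}{D^2}.
\]
The critical points are therefore the roots of the quadratic $1-ax(1-x)=0$. For $a>4$ these roots are simple, since the discriminant is positive. At a simple root $g$ of the factor $1-ax(1-x)$, the second derivative is $f''=g'E/D^2$, because the term involving $g$ itself vanishes there. Here $g'(x)=-a(1-2x)$, which is nonzero because $c_l,c_r\neq\tfrac12$. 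Hence $f''(c_l)\neq0$ and $f''(c_r)\neq0$, so both critical points are non-degenerate.

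\textbf{Conclusion.} Proposition \ref{lem:supcycle} applies to any absolutely continuous ergodic invariant measure $\mu$ of $f_{MW}$, so its support is a cycle of intervals. The endpoints cause no trouble: $0$ and $1$ are repelling fixed points and are single points, so an absolutely continuous $\mu$ gives them zero mass and the conclusion is unaffected.

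If one prefers to work with the conjugate map $F$, the same argument goes through on the corresponding compact interval, because the conjugacy $y=\frac1a\ln\frac{x}{1-x}$ is a diffeomorphism on $(0,1)$.

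The word ``(ergodic)'' in the statement should be read the same way as in Proposition \ref{lem:supcycle}, that is, as an assumption on $\mu$.
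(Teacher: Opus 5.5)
Your proposal is correct and takes the same route as the paper, which simply invokes Proposition \ref{lem:supcycle} after noting that $f_{MW}$ is bimodal with negative Schwarzian derivative for $a>4$. Your computation $f_{MW}'(x)=E\,(1-ax(1-x))/D^2$ and the check $f_{MW}''(c_l),f_{MW}''(c_r)\neq 0$ fill in the non-degeneracy hypothesis, which the paper leaves implicit at this point and only asserts later, for the conjugate map $F$.
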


In~\cite{lyubich2002}, Lyubich proved that for Lebesgue almost all
values of the parameter, the quadratic map of an interval has either
an attracting periodic orbit or an absolutely continuous invariant
measure. By absolutely continuous we mean absolutely continuous with
respect to the Lebesgue measure. We also assume that the measure of
the whole space is 1. There are reasons \cite{bruin2010lebesgue,blokh1990decomposition} to suspect that similar
properties hold for other ``natural'' families of maps with negative
Schwarzian derivative and nondegenerate critical points, so we will
concentrate on those two types of behavior.

For $a>4$ the map $F$ has a negative Schwarzian derivative and two critical
points, both nondegenerate. This means that the behavior of the
trajectories is ruled by the behavior of the trajectories of the
critical points. In particular, if there is an attracting periodic
orbit, then there is a critical point in the immediate basin of
attraction of this orbit (that is, there is an interval joining
this critical point with some point of our periodic orbit, such
that the trajectory of every point of this interval converges to
our periodic orbit) \cite{de2012one}. 
A more careful treatment is needed when one looks for an absolutely continuous invariant measure. Nevertheless, by Proposition \ref{lem:supcycle}, in our case this measure has to live at a cycle of intervals.
Thus, if there is an absolutely continuous invariant measure, its support (the smallest
closed set of full measure) is an invariant cycle of intervals,
and there is a critical point in this support. In this article we will focus on possible behavior of such cycles for game dynamics driven by \eqref{map}. 

Observe that if there is an attracting periodic orbit, then its
immediate basin of attraction is an invariant cycle of intervals.
Remember that when we speak of a cycle of intervals, we include the
possibility that the length of this cycle is 1. However, we exclude
the whole interval on which our map is defined, unless strictly inside
it there is no other invariant cycle of intervals.

In such a way we get the following possible behaviors of our maps.
\begin{enumerate}
\item There is one invariant cycle of intervals (Section \ref{sec:oneinvariant}).
  \begin{enumerate}
  \item There is an attracting periodic orbit (Section \ref{sec:gapo}).
    \begin{enumerate}
    \item Both critical points are in the immediate basin of
      attraction of the periodic orbit.
    \item Only one critical point is in the immediate basin of
      attraction of the periodic orbit.
    \end{enumerate}
  \item There is an absolutely continuous invariant measure.
    \begin{enumerate}
    \item Both critical points are in the support of this measure (Section \ref{sec:acim}).
    \item Only one critical point is in the support of this measure (Section \ref{sec:one}).
    \end{enumerate}
  \end{enumerate}
\item There are two disjoint invariant cycles of intervals (Section \ref{sec:2cycles}).
  \begin{enumerate}
  \item There are two attracting periodic orbits (Section \ref{sec:2apo}).
  \item There is one attracting periodic orbit and one absolutely
    continuous invariant measure (Section \ref{sec:2apo}).
  \item There are two absolutely continuous invariant measures (Section \ref{sec:2measures}).
  \end{enumerate}
\end{enumerate}


In the following section we will show that the game dynamics introduced by multiplicative weights update algorithm in simple congestion games demonstrates all of these behaviors. Thus, possibilities for this game dynamics range from convergence to periodic orbit of all initial points (excluding a set of measure zero) to more sophisticated behavior with a mix of absolutely continuous invariant measures and attracting periodic orbit.  

\subsection{One invariant cycle}
\label{sec:oneinvariant}
\subsubsection{(Globally) attracting periodic orbit}
\label{sec:gapo}
We begin with the simplest behavior described in 1.a.
By Lemma \ref{lemma-recall}, when $b=\frac kn$ and  $a$ is sufficiently large, there exists an attracting periodic orbit of period $n$ such that the trajectories of Lebesgue almost all initializations of $f_{MW}$ are attracted to it. Then one observes only a unique attracting periodic orbit (case 1.a). Nevertheless, taking $a$ (and thus the learning rate) very large is not always the proper choice in applications. What happens for more reasonable choices? Are there other possibilities? Moreover, which of behaviors from 1.a one can really see? 
We show in the $(a,b)$-plane in Figure~\ref{f2} that there exist both ranges of parameters for which both critical points lay in the immediate basin of the unique attracting periodic orbit, and ranges of parameters for which only one of them is attracted to such orbit.\footnote{In addition, we detect values for which one can spot two attracting periodic orbits, the case which will be described in details in Section \ref{sec:2apo}.} 
\begin{figure}[h!]
\begin{center}
\includegraphics[width=0.6\textwidth]{attr8ee.PNG}
\caption{Various periodic orbits for various parameter
  values.
  The horizontal axis is $a$, from $32.02$ to $32.1$, the vertical axis is
$b$, from $0.2942$ to $0.2942495$. The color of the pixel depends on
the behavior of the trajectories of critical points for the values of
$a,b$ corresponding to this pixel. {\color{red}Red} means that both
critical points are in the immediate basin of attraction of the same
periodic orbit (case 1.a.i). {\color{yellow}Yellow} means that the
left critical point is in the immediate basin of attraction of a
periodic orbit, but the right one is not (case 1.a.ii).
{\color{cyan}Cyan} means that the right critical point is in the
immediate basin of attraction of a periodic orbit, but the left one is
not (also case 1.a.ii). {\color{blue}Blue} means that both critical
points are in the immediate basins of attraction of different periodic
orbits (case 2.a).
  }\label{f2}
\end{center}
\end{figure}

\subsubsection{Absolutely continuous invariant measure}
\label{sec:acim}
Here we want to find at least one value of parameters $a,b$ for which
we are sure that there is a unique absolutely continuous invariant
measure and no attracting periodic orbits.  This will be case 1.b.i. But showing existence of an absolutely continuous invariant measure in numerical experiments is not so evident.
For this, we look for the values of $a,b$, for which the second image
of the right critical point $y_r$ is equal to the left critical point
$y_l$, and the third image of $y_l$ is equal to the fixed point $y_f$.
By~\cite{Mis_acm}, this implies the existence of such measure.
Figure~\ref{f3} suggests that this occurs for $a\approx31.047$ and
$b\approx0.374875$.

The measure is unique, since otherwise the right critical point would
be in the support of one measure and the left one in the support of
the other measure. Those supports have to be disjoint and invariant,
but the second image of the right critical point is equal to the left
critical point, and we would get a contradiction.

\begin{figure}[h!]
\begin{center}
\includegraphics[width=60truemm]{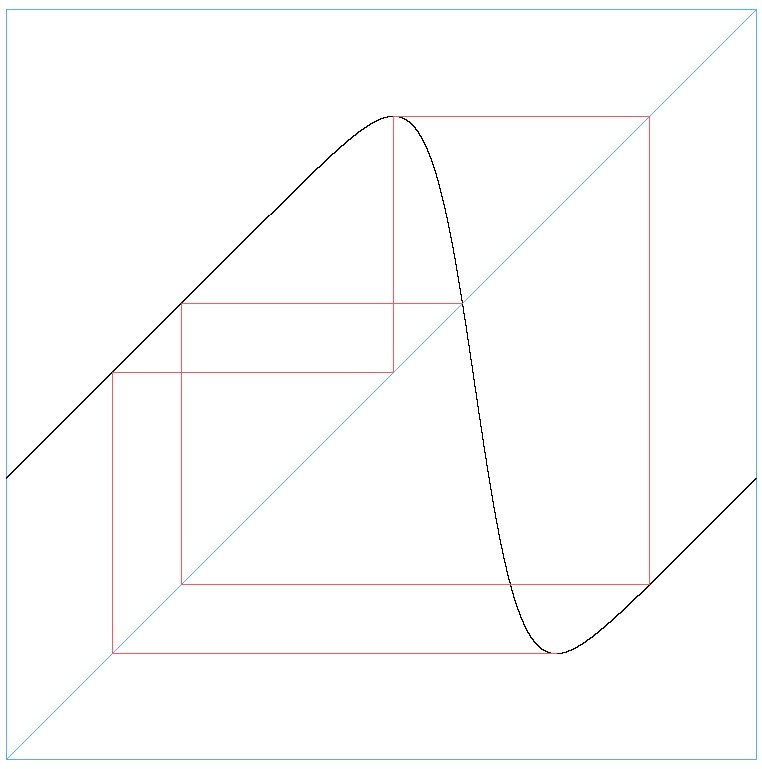}
\caption{$F^2(y_r)=y_l$ and $F^3(y_l)=y_f$.}\label{f3}
\end{center}
\end{figure}

However, this is a very week numerical evidence, since small changes
in parameters can change the picture in an unpredictable way. To get a
stronger evidence, consider the rectangle in the $(a,b)$-plane with
sides at $a=27.5$, $a=40$, $b=0.25$, $b=0.5$ (see Figures
\ref{f4a}--\ref{f4d}). For the four sides of the rectangle, we draw
the graphs of $F^3(y_l)-y_f$ (black) and $F^2(y_r)-y_l$ (green). The magenta line is at the level 0.

\begin{figure}[h!]
\centering
\begin{subfigure}{0.45\textwidth}
\begin{center}
\includegraphics[width=0.75\textwidth]{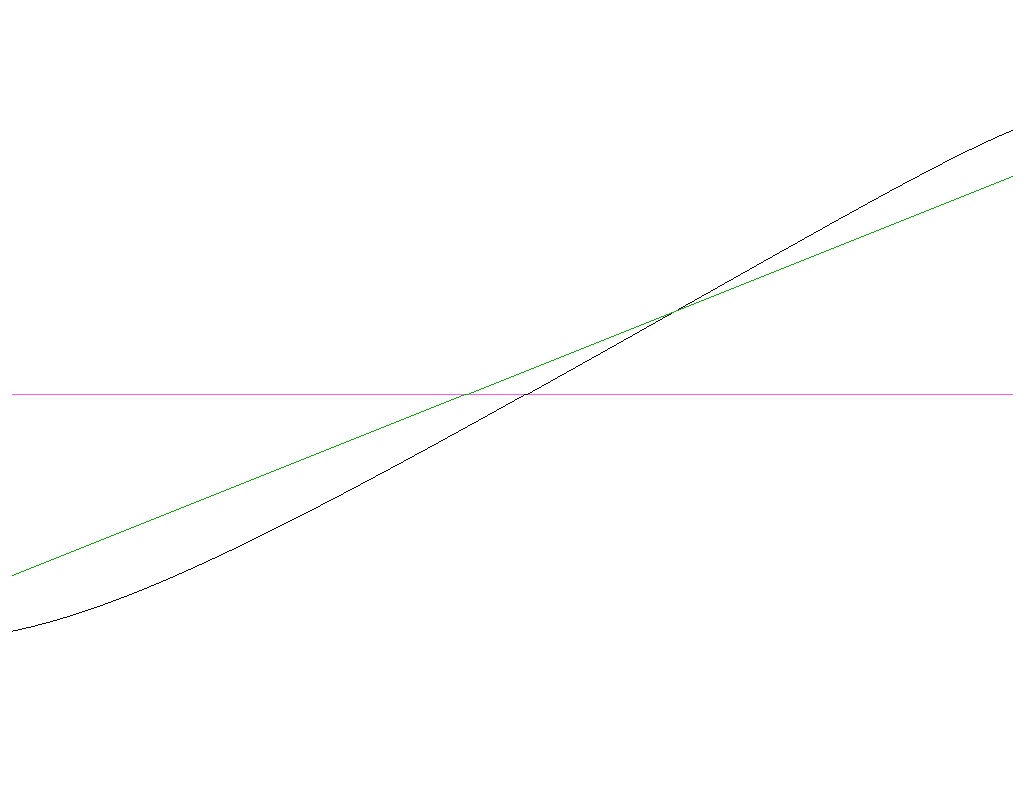}
\caption{The side $a=27.5$.}\label{f4a}
\end{center}
\end{subfigure}
\begin{subfigure}{0.45\textwidth}
\begin{center}
\includegraphics[width=0.75\textwidth]{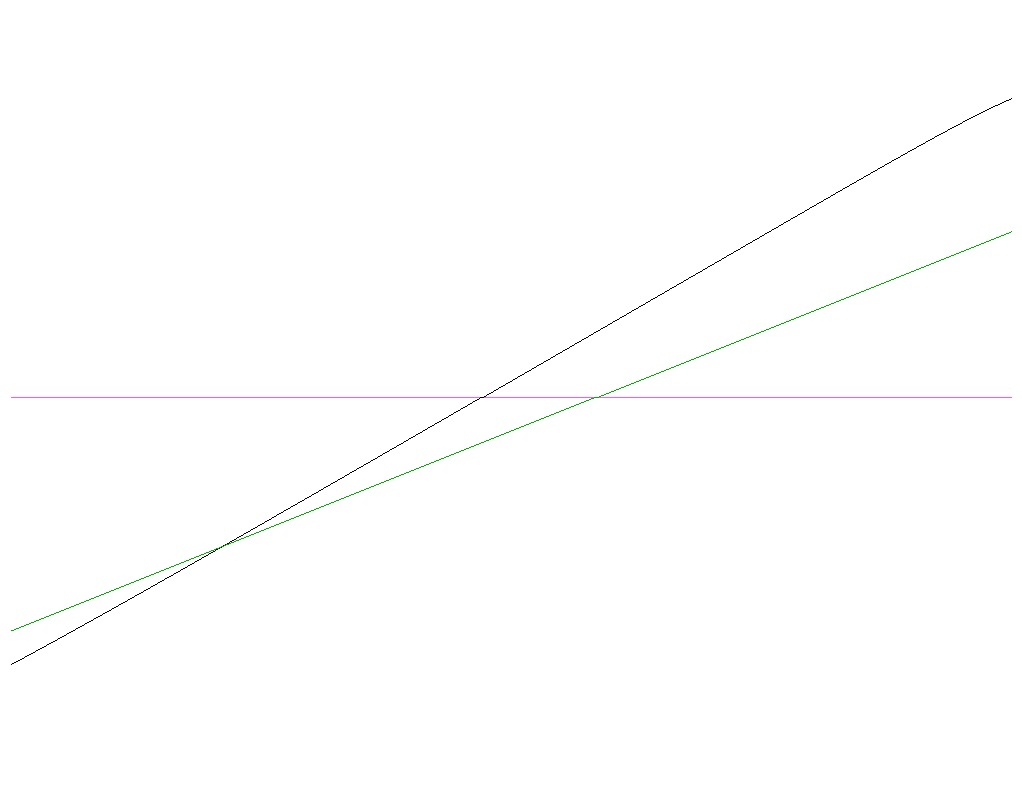}
\caption{The side $a=40$.}\label{f4b}
\end{center}
\end{subfigure}
\begin{subfigure}{0.45\textwidth}
\begin{center}
\includegraphics[width=0.75\textwidth]{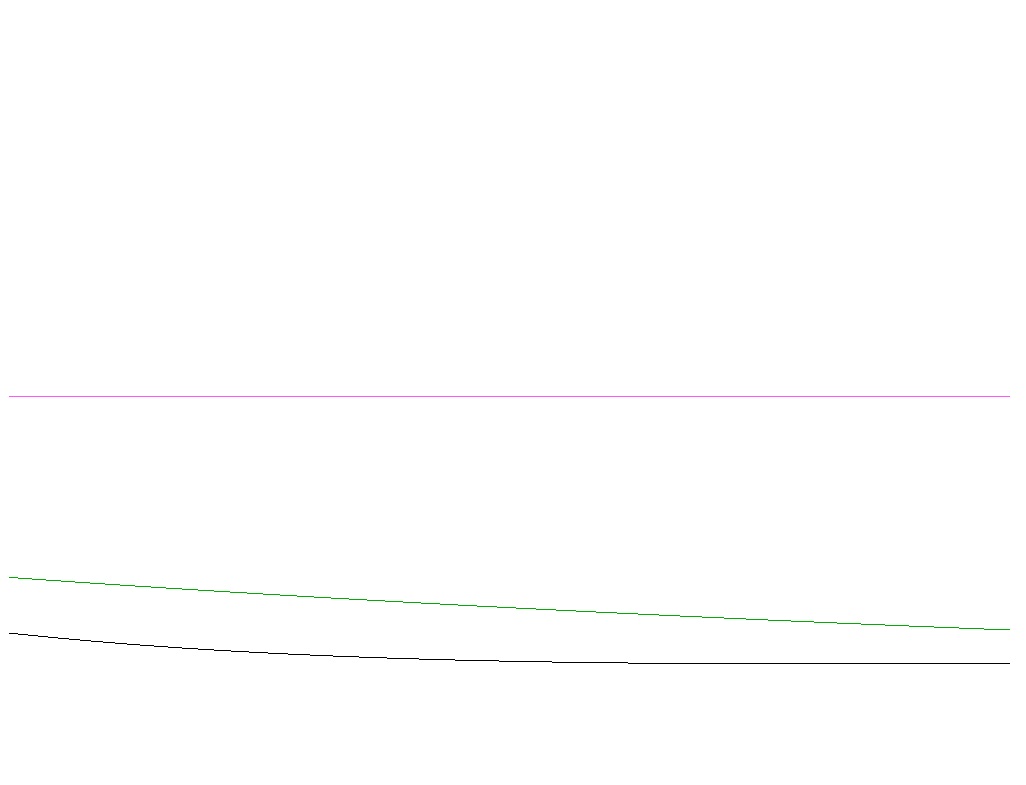}
\caption{The side $b=0.25$.}\label{f4c}
\end{center}
\end{subfigure}
\begin{subfigure}{0.45\textwidth}
\begin{center}
\includegraphics[width=0.75\textwidth]{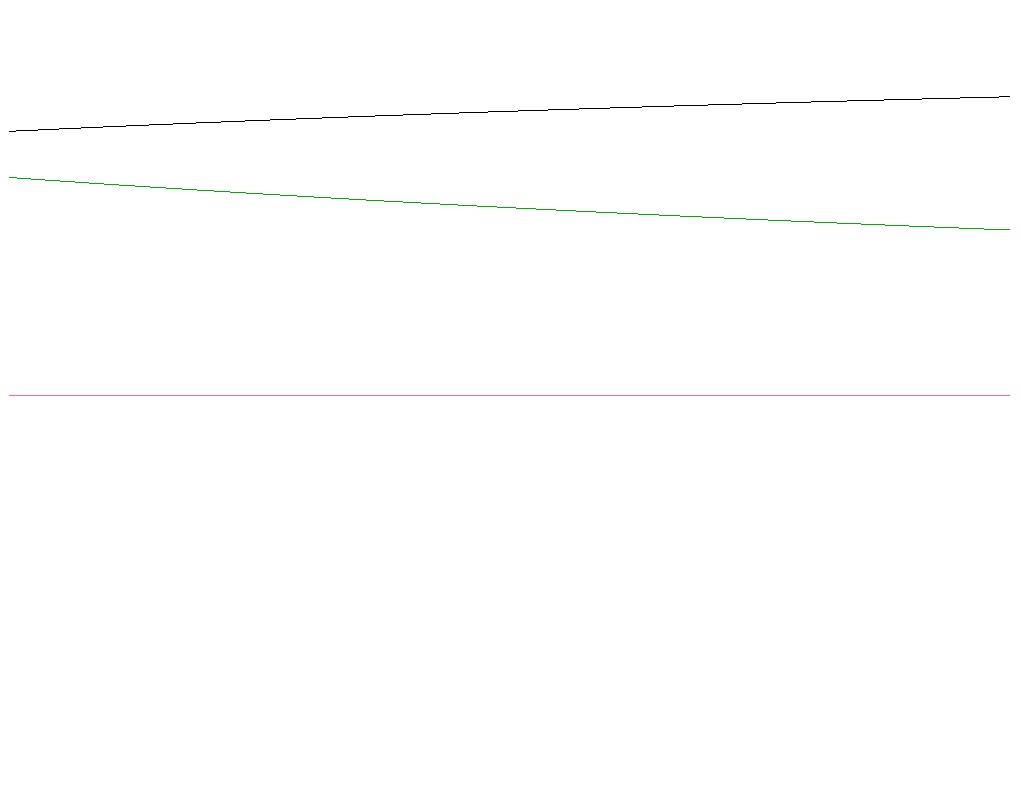}
\caption{The side $b=0.5$.}\label{f4d}
\end{center}
\end{subfigure}
\caption{The $(a,b)$-plane}
\end{figure}

When a=27.5 and b increases, the graphs go up and cyan gets to 0 level
first. When a=40 and b increases, the graphs go up and black gets to 0
level first. When b=0.25 and a varies, both graphs are below 0. When
b=0.5 and a varies, both graphs are above 0.
Moreover, all functions whose graphs we see in the figure are
monotone.

Denote our rectangle by $R$ and its boundary by $B$. Let $G:R\to\R^2$
be given by $G=(F^3(y_l)-y_f, F^2(y_r)-y_l)$. Then we see that the
image under $G$ of the side $b=0.25$ is in the third quadrant, of the
side $b=0.5$ in the first quadrant, of the side $a=27.5$ in the first,
second and third quadrants, and of the side $a=40$ in the third,
fourth and first quadrants. Therefore, $G|_B$ is a homeomorphism onto
its image and $G(B)$ is a Jordan curve with the origin in the bounded
component of its complement. Thus, it remains to prove the following
lemma.

\begin{lemma}\label{toplemma}
Let $E$ be a set homeomorphic to the unit closed disk $D$, with the
boundary $T$. Let $G:E\to\R^2$ be a continuous map, which maps $T$
homeomorphically onto its image, so $G(T)$ is a Jordan curve. Let $C$
be the bounded component of $\R^2\setminus G(T)$. Then $G(E)$ contains
$C$.
\end{lemma}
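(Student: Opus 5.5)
We argue by contradiction, using the fact that the Jordan curve $G(T)$ is not contractible in the plane with a point of $C$ removed. Suppose some point $p\in C$ is not in $G(E)$. Since $E$ is homeomorphic to $D$, fix a homeomorphism $h\colon D\to E$; then $h$ maps the unit circle $S^1$ onto $T$. Replacing $G$ by $G\circ h$, we may assume $E=D$ and $T=S^1$. The composition $g=G|_{S^1}\colon S^1\to G(S^1)$ is then a homeomorphism onto the Jordan curve $J=G(S^1)$, and $G$ maps all of $D$ into $\R^2\setminus\{p\}$.

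The first step is the contraction. The map $H\colon S^1\times[0,1]\to\R^2\setminus\{p\}$, $H(z,t)=G(tz)$, is a homotopy in $\R^2\setminus\{p\}$ from the constant loop $G(0)$ to the loop $g$. Hence the winding number of $g$ around $p$ is zero. Equivalently, the induced map $g_*\colon H_1(S^1)\to H_1(\R^2\setminus\{p\})\cong\mathbb{Z}$ is zero, since $g$ factors through the disk $D$.

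The second step is to show that a Jordan curve winds around every point of its bounded complementary component with winding number $\pm1$, and in particular not $0$. This is the main obstacle and is genuinely a Jordan curve theorem–type fact. I would derive it from the standard homological form of the Jordan curve theorem via Alexander duality. For a Jordan curve $J\subset S^2=\R^2\cup\{\infty\}$, the duality gives $\tilde H_0(S^2\setminus J)\cong \check H^1(J)\cong\mathbb{Z}$, which yields the two complementary components. The same duality is natural with respect to inclusions, so the class of $J$ generates $H_1(S^2\setminus\{p,\infty\})=H_1(\R^2\setminus\{p\})$ whenever $p$ and $\infty$ lie in different components of $S^2\setminus J$. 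Concretely, the inclusion $J\hookrightarrow \R^2\setminus\{p\}$ is dual to the map $\tilde H_0(S^2\setminus J)\to\tilde H_0(\{p,\infty\})$. That map is an isomorphism precisely because $p\in C$ and $\infty$ lies in the unbounded component. Since $g\colon S^1\to J$ is a homeomorphism, $g_*$ is an isomorphism on $H_1$, so the winding number of $g$ about $p$ is $\pm1$. If one prefers to avoid duality, an alternative is to quote the known lemma directly: a Jordan curve has index $\pm1$ about points of its interior. This is classical, for instance in treatments of the Jordan curve theorem via winding numbers.

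Combining the two steps gives $0=\pm1$, a contradiction. Hence every $p\in C$ lies in $G(E)$, that is, $C\subset G(E)$. In the application above, the origin lies in $C$, so the origin is in $G(R)$. This yields parameters $(a,b)\in R$ with $F^3(y_l)=y_f$ and $F^2(y_r)=y_l$.
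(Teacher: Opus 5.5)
Your proof is correct, but it takes a different route from the paper. The paper's argument has two steps. First it uses the Jordan--Schoenflies theorem, together with extending a circle homeomorphism over the disk, to pre- and postcompose $G$ with homeomorphisms. The result is a map $H\colon D\to\R^2$ that is the identity on the unit circle $S$. Second, if some interior point $x$ were missed, composing $H$ with radial projection from $x$ would give a retraction of $D$ onto $S$, contradicting the no-retraction theorem.

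You skip the normalization entirely. Instead you set the null-homotopy $(z,t)\mapsto G(tz)$ of $G|_{S^1}$ in $\R^2\setminus\{p\}$ against the fact that a Jordan curve has winding number $\pm1$ about every point of its bounded component. Both arguments rest on the same obstruction: a loop that winds around a puncture cannot extend over a disk that avoids the puncture. What differs is which planar theorem does the heavy lifting.

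The paper invokes Schoenflies, a strong and genuinely two-dimensional result, so that its final step is the elementary no-retraction theorem. You need only the index-$\pm1$ form of the Jordan curve theorem, which Schoenflies implies but which is proved without it. You also avoid the reparametrization the paper calls ``an elementary construction.'' A further advantage is that your argument works verbatim for $G\colon D^n\to\R^n$ with $G|_{S^{n-1}}$ an embedding, using Jordan--Brouwer separation and Alexander duality. In that setting Schoenflies fails (the Alexander horned sphere), so the paper's normalization is unavailable. The paper's version is shorter if Schoenflies is taken as a black box.

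One small correction to your duality sketch. The map induced by inclusion goes $\tilde H_0(\{p,\infty\})\to\tilde H_0(S^2\setminus J)$, not the other way. Also, the fact you are really using is the linking pairing between the disjoint compacta $J$ and $\{p,\infty\}$, not naturality of Alexander duality under inclusions. Since you also offer quoting the classical index lemma directly, this does not affect correctness.
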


\begin{proof}
By the Jordan-Schoenflies Theorem and an elementary construction we
can precompose and postcompose $G$ via homeomorphisms, so that we get
a continuous map $H:D\to\R^2$, which is the identity on the unit
circle $S$. Suppose some point $x\in D\setminus S$ does not belong to
$H(D)$. Then the same it true for all points from some open
neighborhood of $x$, so by postcomposing $H$ with the map from the
punctured unit disk onto its boundary via rays emanating from $x$ we
get a retraction of $D$ onto $S$. But such retraction does
not exist, a contradiction. This completes the proof.
\end{proof}

\subsubsection{One measure served by one critical point}
\label{sec:one}
Now we exemplify a case 1.b.ii in \eqref{map} maps. Let us try to find parameters for which it occurs. In
Figure~\ref{f6} we present bifurcation diagram for $b=0.38$ where $a$ varies from $26.5$ to 27.\footnote{Bifurcation diagram describes limiting behavior of trajectories, showing attractors of the system. In Figure~\ref{f6} we do it by taking as initial points critical points of the map. Then we calculate 20000 iterations (to get close to the limiting behavior), and draw next 100 iterations. As we deal with the map with negative Schwarzian derivative to show all attractors it is sufficient to use only critical points of the map.}  There, for a large interval of values of $a$ there is a
periodic cycle of length 3 of intervals. One critical point is in this
cycle, while the other is outside it. Moreover, this is the only
invariant cycle of intervals. As $a$ varies, it should go through the usual
unimodal bifurcations \cite{de2012one,collet1983positive}, so there are values of $a$ for which
there is an absolutely continuous invariant measure supported on this
cycle. For those values of $a$ we have case 1.b.ii.

\begin{figure}[h!]
\begin{center}
\includegraphics[width=110truemm]{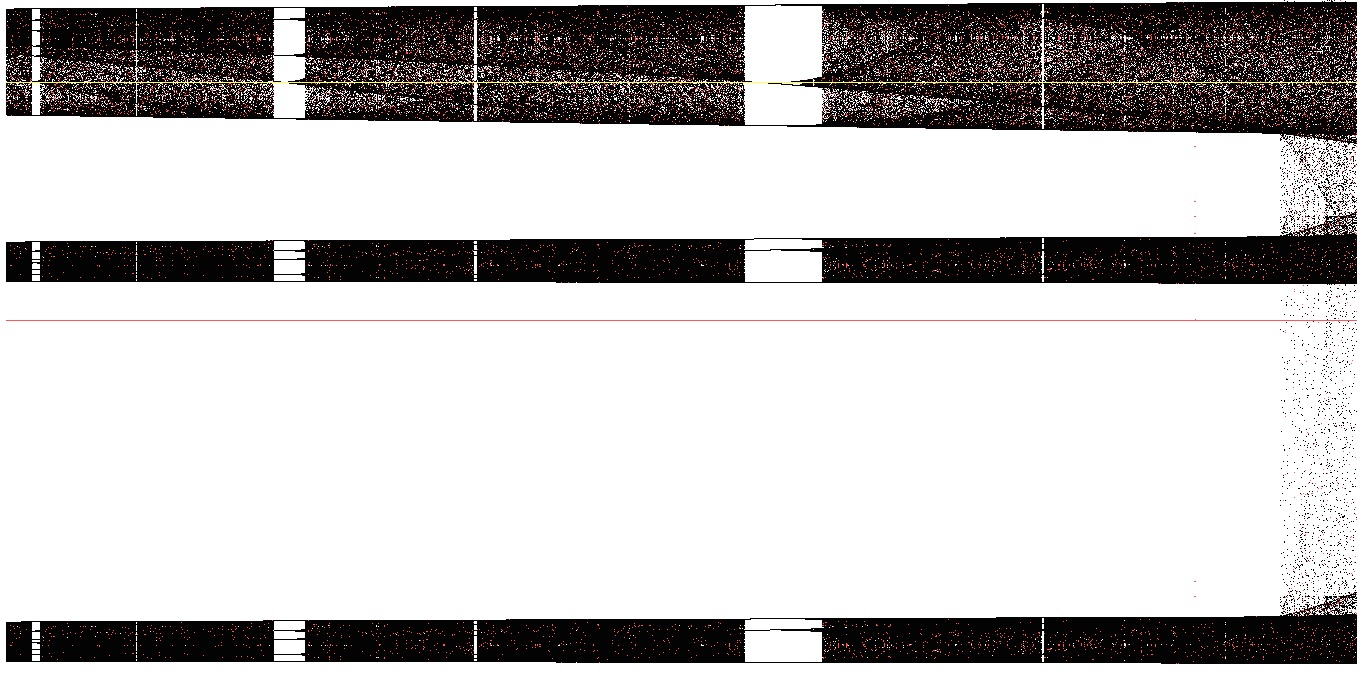}
\caption{Here $b=0.38$ and $a$ (on the horizontal axis) varies from
  $26.5$ to $27$. The positions of critical points are marked in
  red and yellow.}\label{f6}
\end{center}
\end{figure}

\subsection{Two cycles of intervals}
\label{sec:2cycles}
Now we focus on the case when there are two
disjoint cycles of intervals (case 2). Because our map $F$ has
negative Schwarzian derivative, in each of those cycles we can find a
critical point of $F$. Therefore, on each interval of the cycle the map
$F^n$, where $n$ is the length of the cycle, is unimodal. As
the parameters vary, we typically see the usual unimodal
bifurcations.

\subsubsection{Attracting periodic orbit coexisting with other invariant cycle}
\label{sec:2apo}
Fix $a=26$ and take $b$ from $0.386$ to
$0.39$. In our example, see Figure~\ref{f1z}, one of the invariant cycles of intervals has an
attracting periodic orbit and a critical point in the immediate basin
of attraction of this orbit. In the other cycle we will find the other
critical point (case 2.a or 2.b). As we mentioned, as the parameter
$b$ varies, we see there the usual unimodal bifurcations. In
particular, for many values of $b$ there is another attracting periodic orbit (case 2.a) and for many other values of $b$ there is an invariant absolutely continuous measure (case 2.b).
In Figure~\ref{f1z} we see that for $b$ from $0.386$ to
almost $0.388$, there are two disjoint cycles of intervals, one of
period 4 and the other of period 6, each of which contains a critical
point of $F$. In the period 6 cycle there is an attracting periodic
orbit of period 6 (see also Figures~\ref{f1b} and \ref{f1b00}).
For $b = 0.3868$, in the period 4 cycle there is an attracting
periodic orbit of period 4 (Figures~\ref{f1a} and~\ref{f1a00}).

\begin{figure}[h!]
\begin{center}
\includegraphics[width=120truemm]{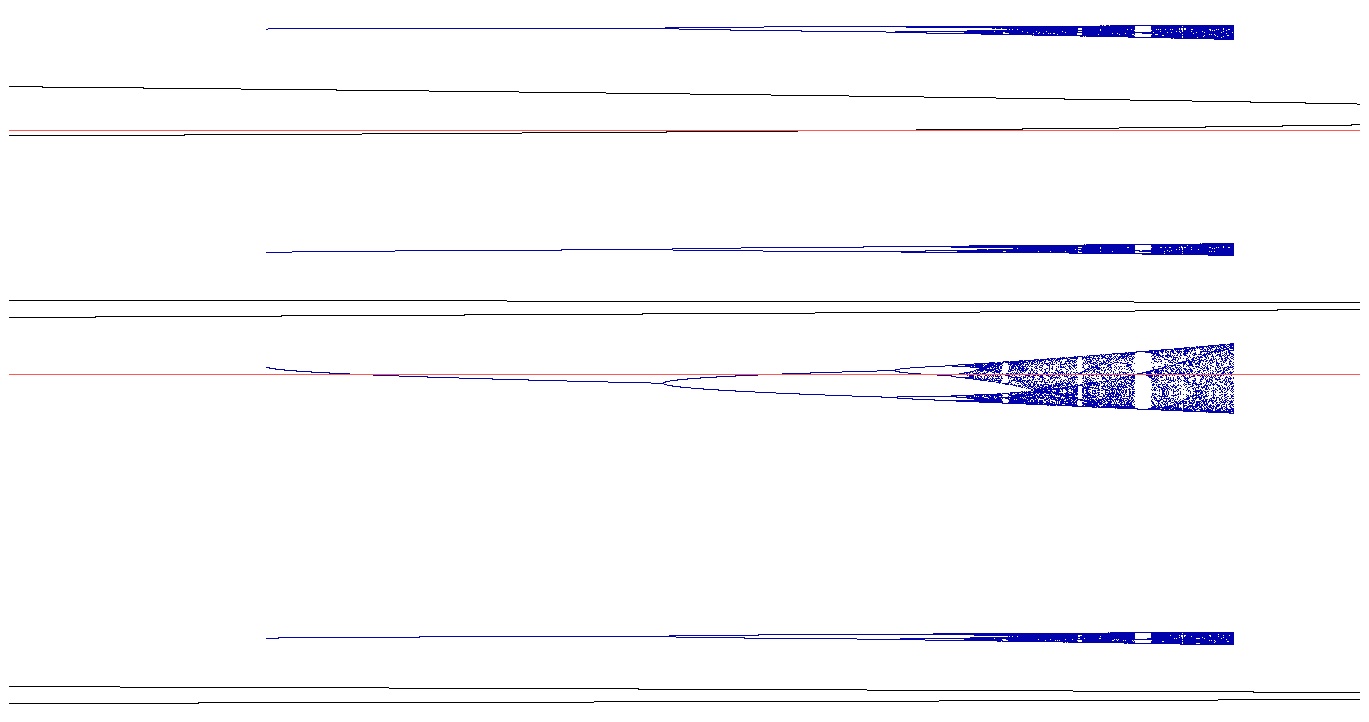}
\caption{Bifurcation diagram for $a=26$ and $b$ (on the horizontal
  axis) varies from $0.386$ to $0.39$. The attracting sets in the
  cycles of intervals are in black (period 6 cycle) and blue (period 4
  cycle). The positions of critical points are marked in
  red.}\label{f1z}
\end{center}
\end{figure}

\begin{figure}[h!]
\centering
\begin{subfigure}[t]{0.4\textwidth}
\begin{center}
\includegraphics[width=0.9\textwidth]{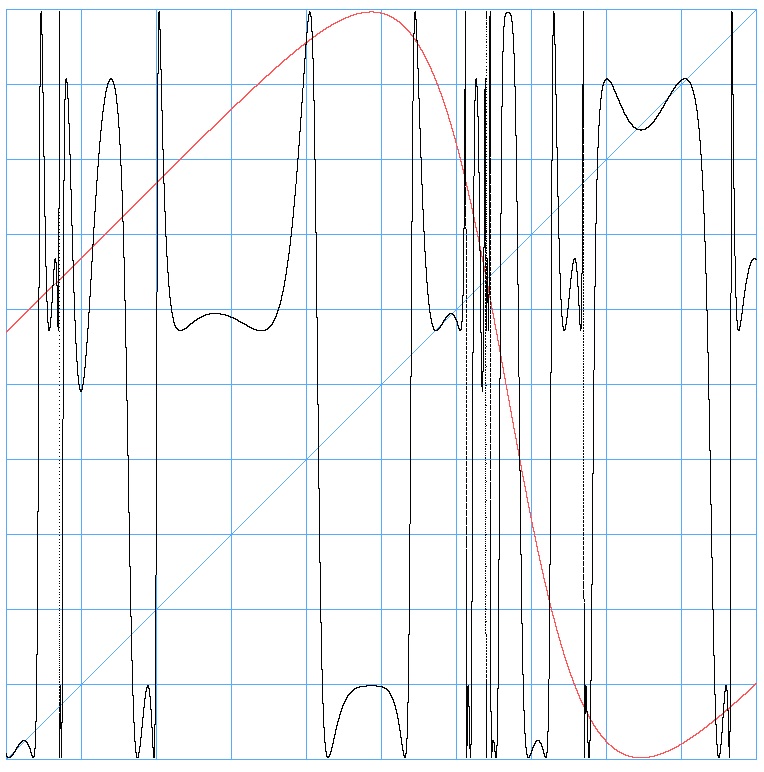}
\caption{Graphs of $F$ (red) and $F^6$ (black), 
 $x\in (-0.453,0.2236)$.}\label{f1b}
\end{center}
\end{subfigure}\qquad
\begin{subfigure}[t]{0.4\textwidth}
\begin{center}
\includegraphics[width=0.9\textwidth]{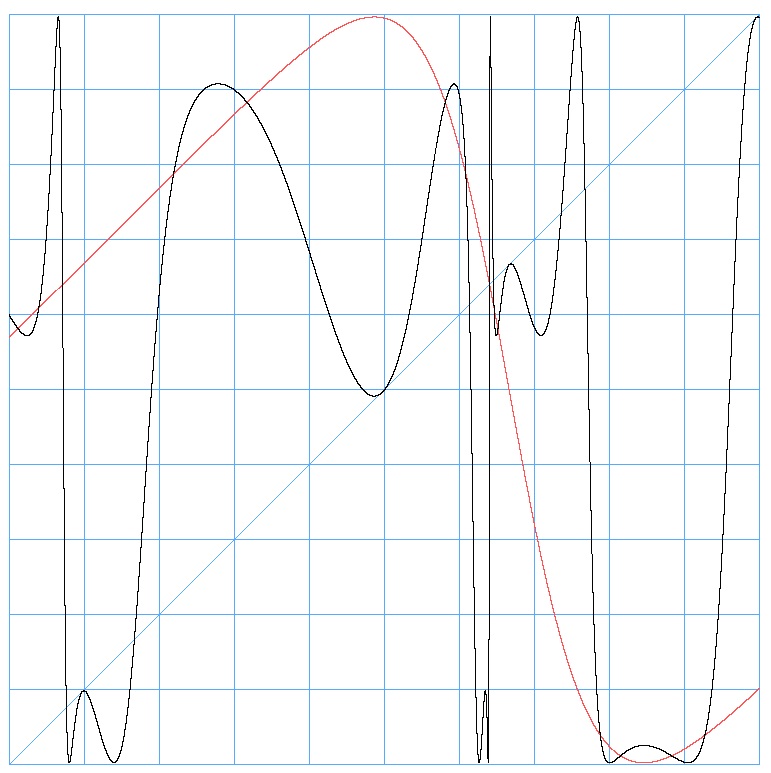}
\caption{Graphs of $F$ (red) and $F^4$ (black), 
  $x$ is between $-0.453$ and $0.2236$.}
  \label{f1a}
\end{center}
\end{subfigure}
\begin{subfigure}[t]{0.4\textwidth}
\begin{center}
\includegraphics[width=0.9\textwidth]{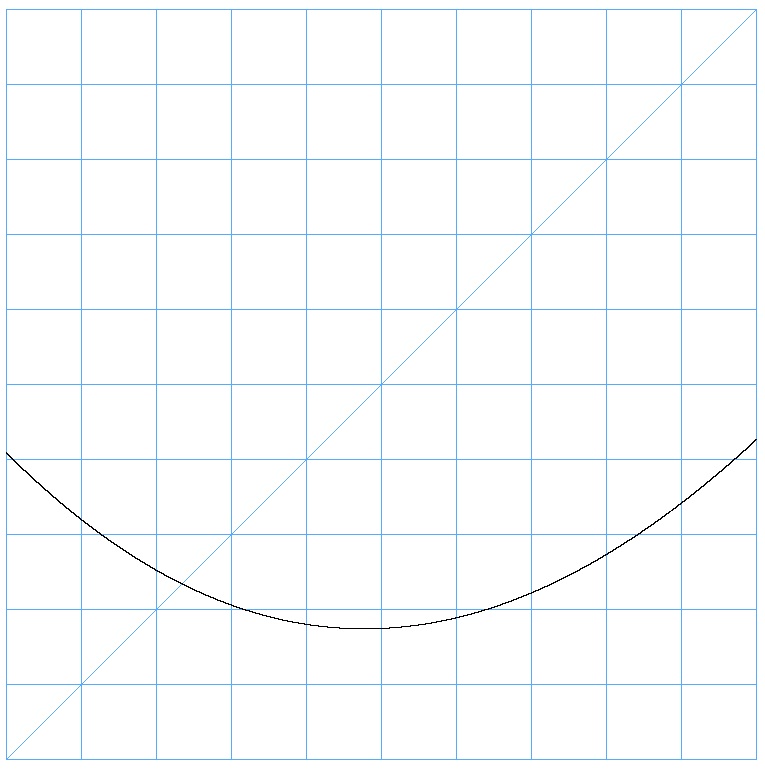}
\caption{The graph of $F^6$, 
$x\in (0.115,0.13)$. The critical point of $F^6$ visible here is the
  right critical point of $F$. It is in the immediate basin of
  attraction of a fixed point of $F^6$.}\label{f1b00}
\end{center}
\end{subfigure}\qquad
\begin{subfigure}[t]{0.4\textwidth}
\begin{center}
\includegraphics[width=0.9\textwidth]{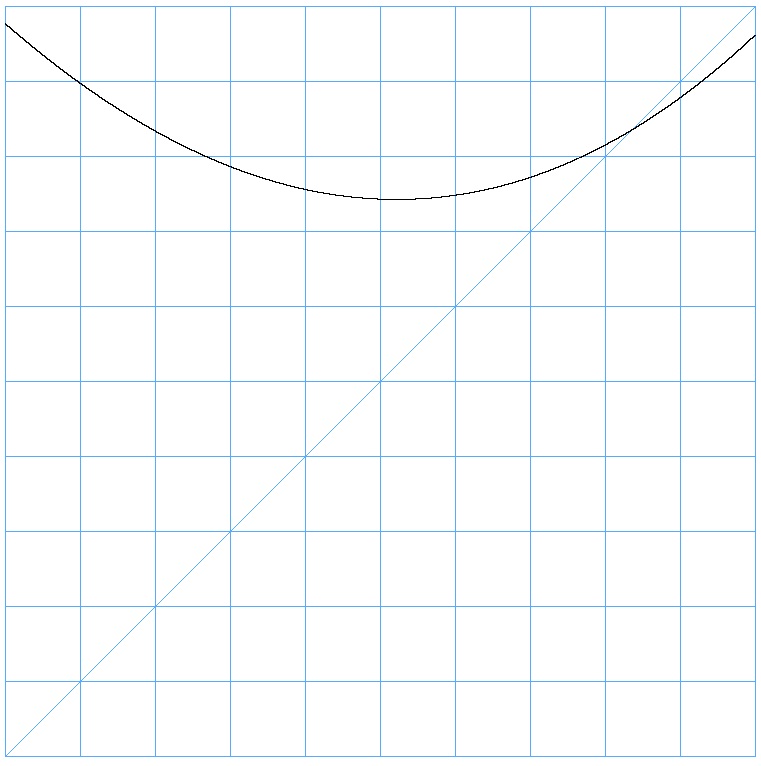}
\caption{The graph of $F^4$, 
$x\in (-0.13,-0.115)$. The critical point of $F^4$ visible here is
  the left critical point of $F$. It is in the immediate basin of
  attraction of a fixed point of $F^4$.}\label{f1a00}
\end{center}
\end{subfigure}
\caption{Two disjoint cycles of intervals for $a=26$, $b=0.3868$.}
\end{figure}

\subsubsection{Two measures}
\label{sec:2measures}
Let us find case 2.c. Figures~\ref{f5a} and~\ref{f5b} show
bifurcation diagrams for $a=27$ and $a=27.01$ respectively, as $b$
varies from $0.386$ to $0.388$. In those figures the trajectories of
both critical points (after skipping first 20000 points, the next 100
points are plotted) are shown in black and red. Of course, there are a
lot (countably many) values of $b$ for which the relevant critical
point is preperiodic (but not periodic). We see that as we move from
$a=27$ to $a=27.01$, the red part of the bifurcation diagram moves to
the right, while the black part moves to the left. This means that for
the suitably chosen preperiodic combinatorial patterns of the
trajectories, there is a value of $a$ between $27$ and $27.01$ for
which both critical points follow those patterns. For this value of
$a$ there are two absolutely continuous invariant measures with
disjoint supports (in the red and black areas).

\begin{figure}[h!]
\begin{center}
\includegraphics[width=110truemm]{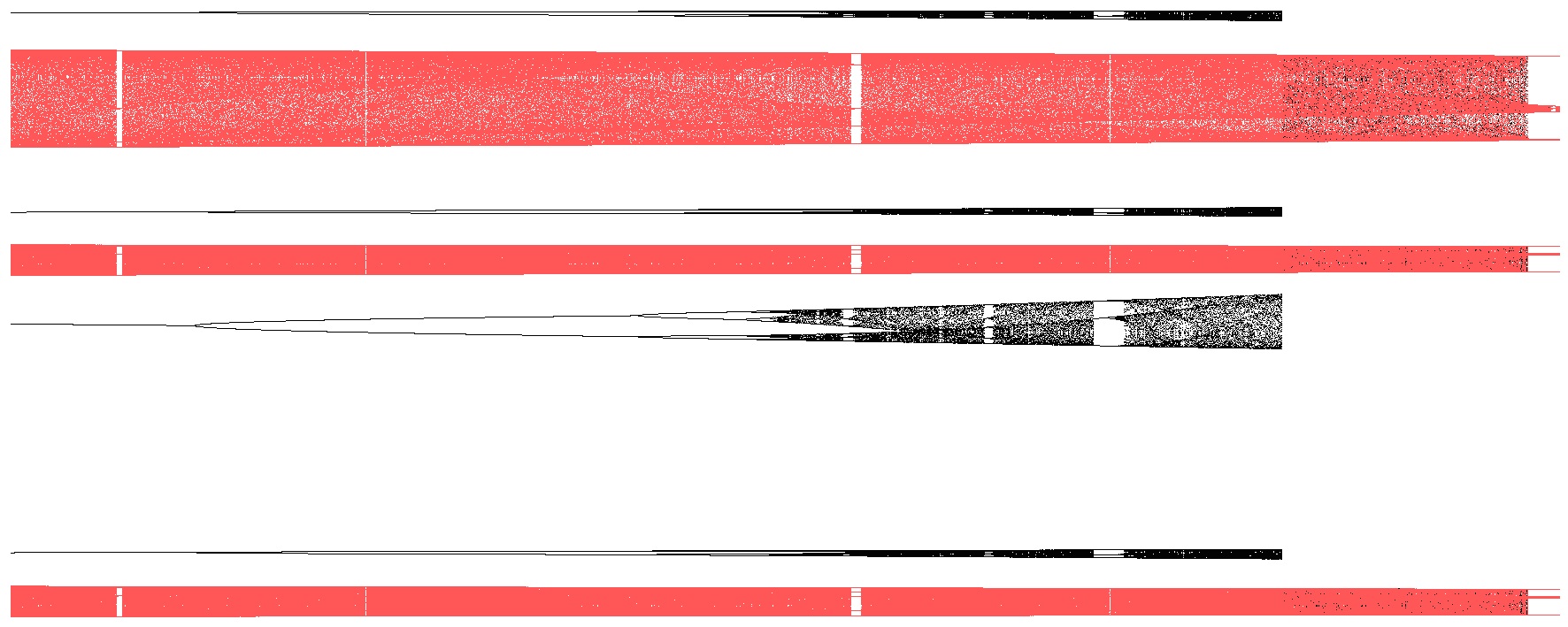}
\caption{Here $a=27$ and $b$ (on the horizontal axis) varies from
  $0.386$ to $0.388$.}\label{f5a}
\end{center}
\end{figure}

\begin{figure}[h!]
\begin{center}
\includegraphics[width=110truemm]{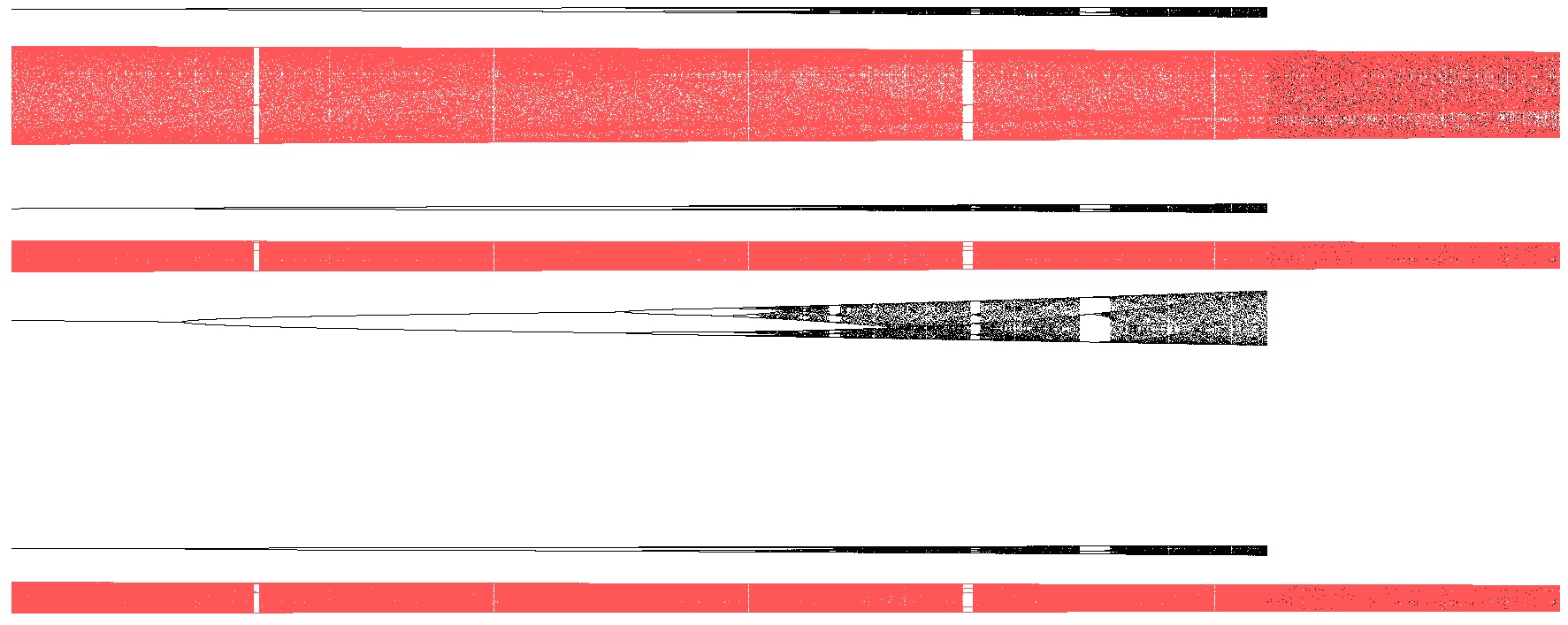}
\caption{Here $a=27.01$ and $b$ (on the horizontal axis) varies from
  $0.386$ to $0.388$.}\label{f5b}
\end{center}
\end{figure}

\subsection{Example observables in economics}
\label{sec:cost}

Here we present two economic/machine learning notions for which there exist observables that can be applied to our theoretical results, these are the social cost and the regret. 

\subsubsection{Social cost}

In the congestion game setting, a particularly meaningful observable is the \emph{total cost function} or, equivalently the \emph{social cost}, which quantifies the overall cost incurred by the population. In our two-route congestion game, if a fraction $x$ of the agents choose route 1, then the social cost is
$
\Phi(x) \;=\; N^2\bigl(\alpha\,x^2 \;+\; \beta\,(1-x)^2\bigr).
$
This function $\Phi(x)$ is a strictly convex potential with a global minimum at the Nash equilibrium $x=b.$

Despite the fact that for sufficiently large $a$ 
the fraction $x_n$ may not converge to the equilibrium, the \emph{natural invariant measure} discussed earlier ensures that the \emph{time-averaged} cost still converges to a well-defined value. Intuitively, while the system may enter limit cycles or chaotic orbits for large $a$ (and hence never settle exactly at $x=b$), this total cost function will exhibit convergent long-term averages.

Figure~\ref{fig: potential_dynamics} provides a visual demonstration of this phenomenon. 
In the \textbf{left column}, we plot the cost function $\Phi\bigl(x_m\bigr)$ as we iterate the MWU map for increasing values of $a$; a line segment connects $\Phi(x_m)$ to $\Phi(x_{m+1}) \;=\;\Phi\bigl(f_{MW}(x_m)\bigr),$ and the line color evolves from blue (early iterations) to red (later iterations). 
The \textbf{middle column} shows the corresponding cobweb diagrams for the map $f_{MW}$, and the \textbf{right column} depicts the time evolution of the fraction of agents on route 1, $x_m.$ From top to bottom, the parameter $a$ increases while $b=0.4$ remains fixed. When $a$ is small (top row), the trajectory converges to the Nash equilibrium. As $a$ increases, we encounter successively more complex behavior: period-2 orbits, period-4 orbits, and eventually chaos (bottom row). Notably, as shown by the horizontal dashed green line in the right column, the orbits time average remains exactly $b$, despite the lack of pointwise convergence to the equilibrium. Here, we initialized with $x_0 = 0.9.$

\begin{figure}[h!]
\begin{center}
\includegraphics[width=0.8\textwidth]{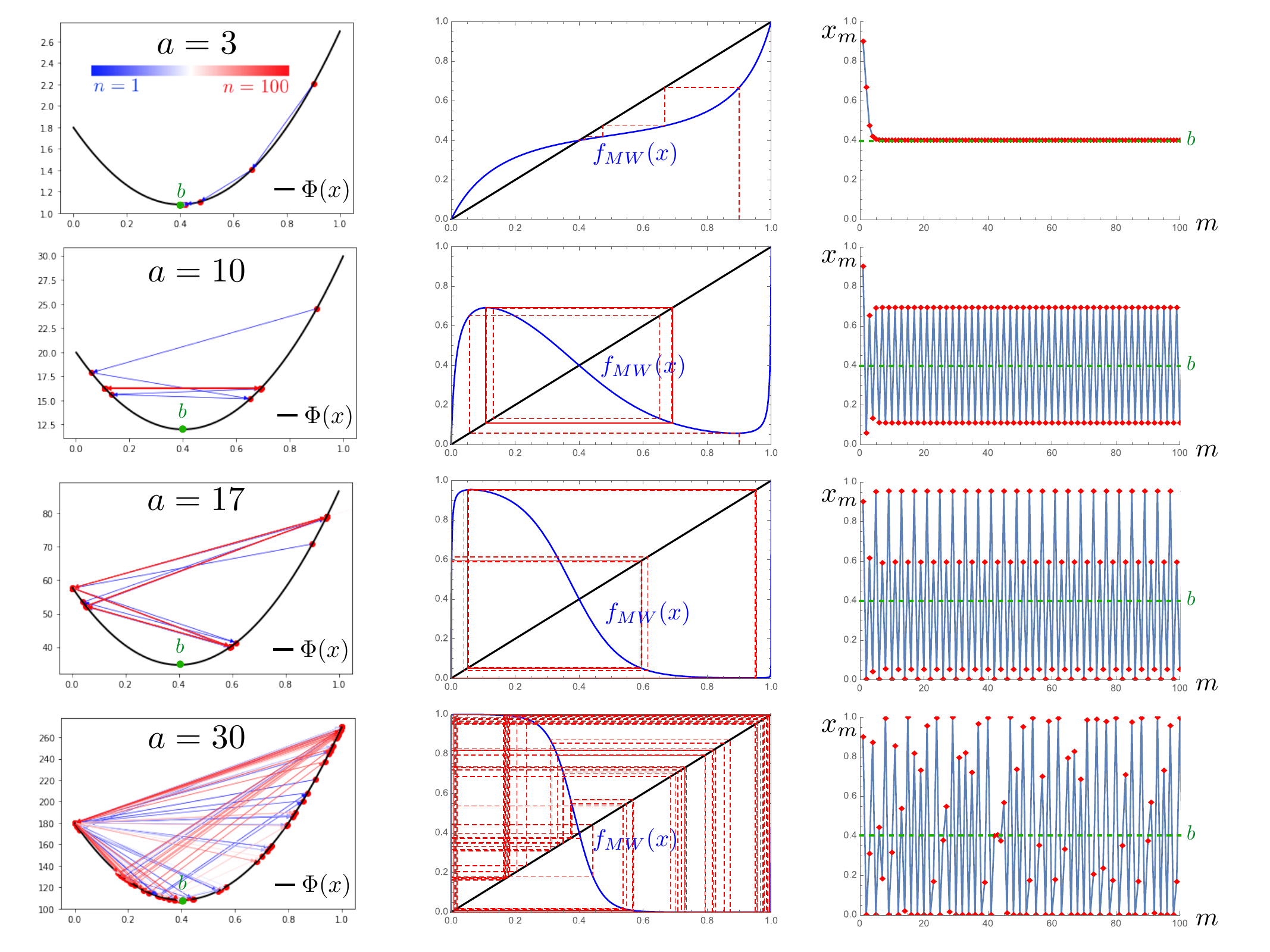}
\caption{An illustration of the cost function, cobweb diagrams, and trajectories for the fraction of agents using route 1, under MWU. 
From top to bottom, the parameter $a$ increases while $b=0.4$ is fixed. 
The \textbf{left column} depicts $\Phi(x_m)$ joined to $\Phi(x_{m+1})=\Phi\bigl(f_{MW}(x_m)\bigr)$ (colored from blue to red as time progresses). 
The \textbf{middle column} shows cobweb diagrams of the map $f_{MW},$ and the \textbf{right column} presents the corresponding time evolution of $x_m$. 
For small $a$, the trajectory converges to the Nash equilibrium at $x=b.$ 
As $a$ increases, we see a transition to period-2 orbits, period-4 orbits, and finally chaotic behavior. 
Crucially, the horizontal green dashed lines indicate the time average of $x_m,$ which equals the equilibrium $b,$ even when $x_m$ does not converge to $b$. 
Here, $x_0 = 0.9.$ 
Since the cost function is an observable, its time average will converge to a well-defined value guaranteed by the main result of this work.}\label{fig: potential_dynamics}
\end{center}
\end{figure}

\subsubsection{Regret}

The regret is a notion that allows us to evaluate the performance of a randomizing algorithm. It measures the performance of an algorithm compared to the best fixed action in hindsight.
Fix cost vectors $c_1, c_2, \ldots, c_T$. The (expected) regret of an algorithm choosing actions $x_1, x_2, \ldots, x_T$ is
\[
 R_T := \sum_{n=1}^T \mathbb{E}_{a_n \sim x_n} c_n(a_n) - \min_{a \in A} \sum_{n=1}^T c_n(a),
\]
where $\mathbb{E}_{a_n \sim x_n} c_n(a_n)$ expresses the expected cost of the algorithm in time period $n$, when an action $a_n \in A$ is chosen according to the probability distribution $x_n$.

By the results of \cite{CFMP2019} we know that in our setting, the limit of the time-average regret is equal to the size of the population of agents $N$ times the limit of the observable $(x-b)^2$ (provided this limit exists), that is
$ \lim_{T \to \infty} R_T/T = N \left( \lim_{T \to \infty} \frac{1}{T} \sum_{n=1}^T (x_n - b)^2 \right)$.
Because the observable $(x-b)^2$ is continuous we get from Theorem \ref{thm1} that the limit of the time average regret is equal to the space average of the observable $(x-b)^2$ with respect to an invariant probability measure. And since the observable $(x-b)^2$ is convex we obtain from Theorem \ref{thm2} that the limit of the time average regret is the lowest in the Nash equilibrium play.


\section{Conclusion \& future work}
\label{sec:discussion}

Our work applies natural invariant measures as a powerful tool for analyzing the long-term behavior of chaotic game dynamics, specifically focusing on the Multiplicative Weights Update (MWU) algorithm. We have demonstrated that, even in a seemingly simple two-strategy congestion game, MWU can exhibit the full spectrum of behaviors possible in one-dimensional dynamical systems.  This includes scenarios with unique or multiple absolutely continuous invariant measures, as well as coexisting chaotic and stable (periodic) behaviors.  Our results provide, for the first time, a comprehensive \emph{statistical} understanding of MWU's chaotic regimes, going beyond the mere identification of Li-Yorke chaos \cite{liyorke}. We characterize the frequency of strategy profiles, sensitivity to initial conditions, and long-term averages of payoffs, all through the lens of these invariant measures.
The intersection of game theory, dynamical systems, computer science, and statistics is rich with open questions, particularly when considering learning dynamics in games. We propose the following directions for future work:


    \textbf{Higher-Dimensional Games and Algorithms:}  A crucial next step is to extend the analysis to higher-dimensional games.  This presents significant challenges:
 \begin{itemize}
        \item
        \textit{Characterizing Invariant Measures:}  In higher dimensions, the structure of invariant measures can be vastly more complex (e.g., fractal measures, SRB measures \cite{young2002srb}).  Developing computational techniques to approximate and analyze these measures is a key challenge. Are there efficient algorithms (perhaps leveraging techniques from computational topology \cite{edelsbrunner2010computational} or statistical physics) to estimate key properties of these measures, such as their dimension or entropy?
        \item \textit{Beyond MWU:} While MWU is a fundamental algorithm, many other learning algorithms are used in practice (e.g., Fictitious Play \cite{brown1951iterative}, Follow the Regularized Leader \cite{shalev2012online}, Replicator Dynamics \cite{taylor1978evolutionary}, Q-learning \cite{watkins1992q}, and various forms of reinforcement learning).  How do the long-term statistical properties of these algorithms differ?  Can we develop a taxonomy of learning algorithms based on the types of invariant measures they generate?
    \end{itemize}

    \textbf{Computational Complexity and Approximation:}
      \begin{itemize}
        \item The computation of invariant measures and the analysis, even in the relatively simple settings, can present a computational challenge. The development of practical tools that can be applied to solve the open questions is crucial.
        \item What are the computational complexities of characterizing the long-term behavior and relevant statistical properties of these measures? How do these depend on the description/size of the game?
    \end{itemize}

These research directions highlight the exciting possibilities at the intersection of these fields. By combining the tools of dynamical systems, game theory, computer science, and statistics, we can gain a deeper understanding of complex, adaptive systems and develop more effective algorithms and mechanisms for a wide range of applications.

\begin{acks}
Fryderyk Falniowski acknowledge the support of the National Science Centre, Poland, grant 2023/51/B/HS4/01343.
Thiparat Chotibut was supported by Thailand Science Research and
  Innovation Fund Chulalongkorn University (IND\_FF\_69\_258\_2300\_062). 
Misiurewicz was partially supported by grant number 426602 from the Simons Foundation.
\end{acks}


\bibliographystyle{ACM-Reference-Format}
\bibliography{ms}

\end{document}